\newcommand{\CM}{Cohen-Macaulay}
\newcommand{\n}{\mathfrak{n} }
\newcommand{\m}{\mathfrak{m} }
\newcommand{\q}{\mathfrak{q} }
\newcommand{\Pb}{\mathbb{P} }
\newcommand{\Z}{\mathbb{Z} }
\newcommand{\Vc}{\mathcal{V}}
\newcommand{\Ic}{\mathcal{I} }
\newcommand{\Sc}{\mathcal{S} }
\newcommand{\Tc}{\mathcal{T} }
\newcommand{\rt}{\rightarrow}
\newcommand{\Om}{\Omega}
\newcommand{\wt}{\widetilde }
\newcommand{\image}{\operatorname{image}}
\newcommand{\Mor}{\operatorname{Mor}}
\newcommand{\ann}{\operatorname{ann}}
\newcommand{\cx}{\operatorname{cx}}
\newcommand{\ecx}{\operatorname{ecx}}
\newcommand{\topv}{\operatorname{top}}
\newcommand{\CMS}{\operatorname{\underline{CM}}}
\newcommand{\CMa}{\operatorname{CM}}
\newcommand{\projdim}{\operatorname{projdim}}
\newcommand{\Hom}{\operatorname{Hom}}
\newcommand{\sHom}{\operatorname{\underline{Hom}}}
\newcommand{\Ext}{\operatorname{Ext}}
\newcommand{\Tor}{\operatorname{Tor}}
\theoremstyle{plain}
\newtheorem{theorem}{Theorem}[section]
\newtheorem{corollary}[theorem]{Corollary}
\newtheorem{lemma}[theorem]{Lemma}
\newtheorem{proposition}[theorem]{Proposition}
\newtheorem{conjecture}[theorem]{Conjecture}
\theoremstyle{definition}
\newtheorem{definition}[theorem]{Definition}
\newtheorem{remark}[theorem]{Remark}
\theoremstyle{remark}
\begin{document}

\title[Support varieties]{Support Varieties and cohomology of Verdier quotients of stable category of complete intersection rings}
\author{Tony~J.~Puthenpurakal}
\date{\today}
\address{Department of Mathematics, IIT Bombay, Powai, Mumbai 400 076}

\email{tputhen@math.iitb.ac.in}
\subjclass{Primary  13C14,  13D09 ; Secondary 13C60, 13D02}
\keywords{stable category of Gorenstein rings, complete intersections, support varieties, Hensel rings  periodic complexes, Auslander-Reiten triangles}

 \begin{abstract}
Let $(A,\m)$ be a complete intersection with $k = A/\m$ algebraically closed. Let $\CMS(A)$ be the stable category of maximal \CM \ $A$-modules. For a large class of thick subcategories $\Sc$ of $\CMS(A)$ we show that there is a theory of support varieties for the Verdier quotient $\Tc = \CMS(A)/\Sc$. As an application we show that the analogous
version of Auslander-Reiten conjecture, Murthys result, Avramov-Buchweitz result on symmetry of vanishing of cohomology holds for $\Tc$
\end{abstract}
 \maketitle
\section{introduction}
Quillen's geometric methods to study cohomology of finite groups is an important contribution in modular representation theory, see \cite{Q}. The  techniques
involved have been generalized and extended to representations of various
Hopf algebras, for instance see  \cite{FS}. In commutative algebra Avramov and Buchweitz introduced the notion of support varieties of a pair of modules over local complete intersections and as an application proved the symmetry of vanishing of Ext over such rings; see \cite{avr-b}. In \cite{B2}, \cite{BO} the notion of support varieties was extended to certain class of triangulated categories

Let $(A,\m)$ be a  commutative Gorenstein local ring with residue field $k$. Let $\CMa(A)$ denote the full subcategory of  maximal \CM \ (= MCM) $A$-modules  and let $\CMS(A)$ denote the stable category of MCM $A$-modules.
It is well-known that $\CMS(A)$ is  a triangulated category with translation functor $\Omega^{-1}$,  (see \cite{Bu}; cf. \ref{T-struc}).

We use Neeman's book \cite{N} for notation on triangulated categories. However we will assume that if $\mathcal{C}$ is a triangulated category then $\Hom_\mathcal{C}(X, Y)$ is a set for any objects $X, Y$ of $\mathcal{C}$.

\s \label{hypothesis}  For the rest of the paper let us assume that $(A,\m)$  is a complete complete intersection ring of dimension $d$ and codimension $c$.
 Assume $k = A/\m$ is algebraically closed.  Some of our results are applicable  more generally. However for simplicity
 we will make this hypothesis throughout this paper.

 There is a theory of support varieties for modules over $A$. Essentially  for every  finitely generated module $E$ over  $A$ an
 algebraic cone $\Vc(E)$ in  $k^{c}$ is attached, see \cite[6.2]{A}.
 Conversely it is known that if $V$ is an algebraic cone in $k^c$ then there exists a finitely generated module $E$ with $\Vc(E) = V$,
 see \cite[2.3]{B}. It is known that $\Vc(\Om^n(E)) = \Vc(E)$ for any $n \geq 0$. Thus we can assume $E$ is maximal \CM.

 \begin{remark}
   It is easier to work with varieties in $\Pb^{c-1}$ than with algebraic cones in $k^c$. So we let $\Vc^*(E)$ be the algebraic set in $\Pb^{c-1}$ corresponding to $\Vc(E)$.
 \end{remark}
 We now give the class of thick subcategories of $\CMS(A)$ that are of interest to us.
 \begin{definition}\label{defnX}
   Let $X$ be an algebraic set in $\Pb^{c-1}$. Set
   \[
   \Sc_X = \{ M \mid M \ \text{is MCM and  } \  \Vc^*(M) \subseteq X \}
   \]
   It is easy to verify that $\Sc_X$ is a thick subcategory of $\CMS(A)$; see Lemma \ref{defX-thick}. We will be interested in the Verdier quotient
   \[
   \Tc_X = \CMS(A)/\Sc_X.
   \]
   Our definition of support variety for objects in $\Tc_X$ is simply
   \[
   \Vc_X(M) = \Vc^*(M) \setminus X.
   \]
   In Corollary \ref{def-supp-X}  we show that this is a well-defined notion in $\Tc_X$.
 \end{definition}

 Although our definition of support varieties in $\Tc_X$ is very simple,  it has significant consequences. We now describe our:

 \textit{Applications:}\\
 (1) Auslander and Reiten conjectured that if $R$ is an Artin ring, $\Lambda $ is an Artin $R$-algebra and $M$ is a finitely generated $\Gamma$-module then
 \[
\Ext^i_\Lambda(M, M \oplus \Lambda) = 0 \quad \text{for all} \ i \geq 1 \implies M \ \text{is projective}.
 \]
 We note that this conjecture makes sense for any ring $\Lambda$. In \cite[1.9]{ADS} it is shown that AR-conjecture holds for complete  intersection rings. Also see
 \cite[Main Theorem]{HL} and \cite[Corollary 4]{Ar} where it is shown that AR-conjecture holds for normal local Gorenstein rings.
 The generalized Auslander Reiten (GAR) conjecture states that if $\Lambda$ is any ring then
 \[
 \Ext^i_\Lambda(M, M \oplus \Lambda) = 0 \quad \text{for all} \ i \geq m \implies \projdim_\Lambda M < m.
 \]
 GAR has been verified for complete intersections, see \cite[4.2]{avr-b}. In this case we just require $\Ext^{2i}_R(M, M) = 0$ for some $i \geq 1$.
 We note that if $R$ is Gorenstein local and $M, N$ are MCM $R$-modules then $\Ext^{i}_R(M, N) = \sHom_R(M, \Om^{-i}(N))$.
 We say a triangulated category $\Tc$ with shift operator $\Sigma$ satisfies \emph{generalized Auslander-Reiten property (GAR) }if for $U \in \Tc$
 \[
 \Hom_\Tc(U, \Sigma^n U) = 0 \ \text{for all} \ n \gg 0 \implies U = 0.
 \]
 Our first application is
 \begin{theorem}
   \label{GAR}
(with hypotheses as in \ref{hypothesis} and notation as in \ref{defnX}.) The triangulated category $\Tc_X$ satisfies GAR for any algebraic set $X \subseteq \Pb^{c-1}$.
 \end{theorem}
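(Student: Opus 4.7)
The plan is to establish GAR for $\Tc_X$ by the contrapositive. Every object $U$ of $\Tc_X$ is represented by an MCM module $M$, so it suffices to show that if $\Vc^*(M) \not\subseteq X$ (i.e.\ $U \neq 0$ in $\Tc_X$) then $\Hom_{\Tc_X}(M, \Om^{-n} M) \neq 0$ for infinitely many $n$.

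\emph{First, reduce to Ext-classes.} The quotient functor $\CMS(A) \to \Tc_X$ induces a natural comparison
\[
\Ext^n_A(M, M) \;=\; \Hom_{\CMS(A)}(M, \Om^{-n} M) \xar \Hom_{\Tc_X}(M, \Om^{-n} M).
\]
By the standard description of morphisms in a Verdier quotient, a class $\xi \in \Ext^n_A(M, M)$ lies in the kernel $K^n$ precisely when the corresponding $\CMS(A)$-morphism $M \to \Om^{-n} M$ factors through some object of $\Sc_X$.

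\emph{Second, control the kernel by support.} I would show that $K^* = \bigoplus_n K^n$ is an $R$-submodule of $\Ext^*_A(M, M)$, where $R = k[\chi_1,\ldots,\chi_c]$ is the ring of Eisenbud operators, and that its support in $\Pb^{c-1}$ is contained in $X$. Closure under the $R$-action is immediate from the naturality of the Eisenbud operators: if $\xi = g \circ h$ factors through $T \in \Sc_X$, then $\chi_i \xi = (\chi_i g) \circ h$ factors through the same $T$. For the support bound, any such $\xi$ lies in the image of the Yoneda pairing
\[
\Ext^*_A(M, T) \otimes_R \Ext^*_A(T, M) \rt \Ext^*_A(M, M),
\]
whose source has $\Pb^{c-1}$-support $\subseteq \Vc^*(M) \cap \Vc^*(T) \subseteq X$ by the Avramov--Buchweitz identity $\Vc^*(M, T) = \Vc^*(M) \cap \Vc^*(T)$, combined with $\Vc^*(T) \subseteq X$ since $T \in \Sc_X$.

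\emph{Third, conclude.} The quotient $E := \Ext^*_A(M, M) / K^*$ is a finitely generated graded $R$-module whose $\Pb^{c-1}$-support contains $\Vc^*(M) \setminus X = \Vc_X(M) \neq \emptyset$. A standard Hilbert-series argument then shows $E$ is nonzero in infinitely many degrees $n$. Since $E$ injects degreewise into $\bigoplus_n \Hom_{\Tc_X}(M, \Om^{-n} M)$, the required nonvanishing follows, contradicting the hypothesis.

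The main obstacle is the second step. The delicate point is that $K^n$ is defined as a union over all factorising objects $T \in \Sc_X$, which is not a priori closed under addition or scalar multiplication. One must combine the thickness of $\Sc_X$ (so that finite direct sums of such $T$'s remain in $\Sc_X$, giving closure under addition), the compatibility of Eisenbud operators with Yoneda composition (giving $R$-linearity), and the full Avramov--Buchweitz support-of-a-pair theory (giving the support bound), which relies essentially on the hypothesis \ref{hypothesis} that $A$ is a complete intersection with algebraically closed residue field.
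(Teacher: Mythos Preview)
Your approach is essentially the paper's: the paper derives GAR from the implication $(1)\Rightarrow(2)$ of Theorem~\ref{sym} with $N=M$, and its proof of that implication rests on exactly your ingredients --- a stable morphism vanishing in $\Tc_X$ factors through some object of $\Sc_X$, finite generation of $\Ext^*$ over the ring of cohomology operators, naturality of those operators, and the Avramov--Buchweitz identity $\Vc^*(M,W)=\Vc^*(M)\cap\Vc^*(W)\subseteq X$ for $W\in\Sc_X$. The only differences are cosmetic (you argue the contrapositive and track the kernel submodule $K^*$, whereas the paper assumes eventual vanishing so that $K^n=\Ext^n$ for $n\gg 0$ and works with $\Ext^*_{\geq m}$ directly); one small correction is that $\Ext^*_A(M,M)$ is an $A[t_1,\ldots,t_c]$-module rather than a $k[\chi_1,\ldots,\chi_c]$-module, so you must tensor with $k$ before speaking of support in $\Pb^{c-1}$, and your element-wise support bound on $K^*$ needs Gulliksen finiteness to pass to a single $T=\bigoplus T_j\in\Sc_X$ --- which is precisely the paper's move with $W=\bigoplus W_j$.
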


 (2) Murthy, \cite[1.6]{M} proved that if $A$ is a complete intersection  of codimension $c$ and for some $m > 0$
  $$\Tor^A_i(M, N) = 0 \quad \text{ for} \  i = m, m+1,\cdots, m+c; $$
  then $\Tor^A_i(M, N) = 0$ for all $i \geq m$.
  The corresponding property with $\Ext^*(-, -)$ is also true.
  We say a triangulated category $\Tc$ with shift operator $\Sigma$ satisfies \emph{Murthy's property }with order $r \geq 1$  if for some $m$ and  $U, V \in \Tc$
  \begin{align*}
    \Hom_\Tc(U, \Sigma^n V) &= 0 \ \text{for } \ n = m, m+1, \cdots. m+r   \\
    &\implies  \Hom_\Tc(U, \Sigma^n V) = 0 \ \text{for all }  \  n \geq m.
  \end{align*}
  Clearly $\CMS(A)$ satisfies Murthy's property with order $c$ if $A$ is a local complete intersection of codimension $c$. See \cite{B2} for some examples of triangulated categories satisfying Murthy's property.
 Our second application is
 \begin{theorem}
   \label{murthy}
(with hypotheses as in \ref{hypothesis} and notation as in \ref{defnX}.) The triangulated category $\Tc_X$ satisfies Murhty's property with order $c$  for any algebraic set $X \subseteq \Pb^{c-1}$.
 \end{theorem}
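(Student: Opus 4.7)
The approach is to package the graded family $\bigoplus_n \Hom_{\Tc_X}(U, \Sigma^n V)$ as a finitely generated graded module over the ring of cohomology operators $R = k[\chi_1, \ldots, \chi_c]$ (with $\deg \chi_i = 2$), and then invoke the classical Avramov-Buchweitz rigidity: if $N^*$ is a finitely generated graded $R$-module with $N^n = 0$ for $c+1$ consecutive values $n = m, \ldots, m+c$, then $N^n = 0$ for all $n \geq m$.

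The first step is to endow $T^*(U,V) := \bigoplus_n \Hom_{\Tc_X}(U, \Sigma^n V)$ with a graded $R$-module structure. Using the calculus of fractions in the Verdier quotient,
\[
\Hom_{\Tc_X}(U, \Sigma^n V) \;=\; \varinjlim_{V \to V'} \Hom_{\CMS(A)}(U, \Sigma^n V'),
\]
where the colimit runs over morphisms $V \to V'$ in $\CMS(A)$ with cone in $\Sc_X$. Each $\Ext^*_A(U, V') = \bigoplus_n \Hom_{\CMS(A)}(U, \Sigma^n V')$ is a finitely generated graded $R$-module, and the transition maps are $R$-linear, so $T^*(U, V)$ inherits a graded $R$-module structure as a filtered colimit of such modules.

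The key step is to prove that $T^*(U, V)$ is finitely generated over $R$, with support in $\operatorname{Proj} R = \Pb^{c-1}$ contained in $(\Vc^*(U) \cap \Vc^*(V)) \setminus X$. The support containment is the main geometric input: it should follow from the support variety theory for $\Tc_X$ developed earlier (Corollary~\ref{def-supp-X}), by observing that any cohomology class supported on $X$ eventually factors through $\Sc_X$ in the colimit. Finite generation is more delicate, since filtered colimits need not preserve it; one would reduce to a cofinal subsystem of $V'$, possibly using the fact \cite[2.3]{B} that every algebraic cone in $k^c$ arises as the support variety of some MCM module, so that a single ``largest'' representative captures the whole colimit up to stabilization.

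Once both finite generation and the support bound are in hand, the theorem follows immediately: the hypothesis $T^n(U, V) = 0$ for $n = m, \ldots, m+c$, combined with the graded-module rigidity over $R$, yields $T^n(U, V) = 0$ for all $n \geq m$. The principal obstacle is establishing finite generation of $T^*(U, V)$ as an $R$-module, which requires careful handling of the filtered colimit and a good choice of cofinal representatives $V'$; this should be tractable using the support-variety machinery the paper has already developed.
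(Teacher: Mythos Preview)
Your proposal has a genuine gap at exactly the point you flag as the ``principal obstacle'': you never establish that $T^*(U,V)=\bigoplus_n \Hom_{\Tc_X}(U,\Sigma^n V)$ is finitely generated over the ring of cohomology operators, and there is no reason to expect this to be true in general. The paper itself warns in the preliminaries that Hom-sets in Verdier quotients need not be finitely generated even as $A$-modules; if a single $\Hom_{\Tc_X}(U,\Sigma^n V)$ fails to be finitely generated over $A$, then $T^*(U,V)$ certainly cannot be finitely generated over $R$. The filtered-colimit description you give is correct, but filtered colimits do not preserve finite generation, and nothing in the support-variety machinery developed earlier (which concerns invariants of single objects, not cohomological finiteness of pairs) supplies a cofinal representative on which the colimit stabilises. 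Indeed, the paper's inability to prove Conjecture~\ref{sym-conj} is attributed precisely to the lack of a good cohomological description of $\Hom_{\Tc_X}(M,\Sigma^* N)$ for arbitrary pairs; your approach would require exactly such a description.

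The paper's actual argument avoids this issue entirely. It proceeds by induction on $\ecx N = \min\{\cx_A L : L\cong N \text{ in }\Tc_X\}$, using the structural Lemma~\ref{mine}: for any MCM $N$ with $\cx_A N\geq 2$ there is a short exact sequence $0\to K\to\Om^{n+2}(N)\to\Om^n(N)\to 0$ with $\cx_A K=\cx_A N-1$. Rotating the resulting triangle gives $\Om^2(N)\to N\to E\to\Om(N)$ in $\Tc_X$ with $\ecx E<\ecx N$. Applying $\Hom_{\Tc_X}(M,-)$ and reading off the long exact sequence, the $c+1$ consecutive vanishings for $N$ force $c$ consecutive vanishings for $E$; induction handles $E$, and the long exact sequence then bootstraps back to $N$. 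This argument works term-by-term in the long exact sequence and never needs any finiteness of $\Hom_{\Tc_X}$.
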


 (3) A spectacular application of Avramov and Buchwetiz's definition of support variety of a pair of modules over a complete intersection $A$ is the following:
 \[
 \Ext^{i}_A(M, N) = 0 \ \text{for all} \ i \gg 0 \quad \text{if and only if} \quad  \Ext^{i}_A(N, M) = 0 \ \text{for all} \ i \gg 0
 \]
 We say a triangulated category $\Tc$ with shift operator $\Sigma$ satisfies \emph{symmetry in vanishing of cohomology }if for $U, V \in \Tc$
 \[
 \Hom_\Tc(U, \Sigma^n V) = 0 \ \text{for all} \ n \gg 0 \implies \Hom_\Tc(V, \Sigma^n U) = 0 \ \text{for all} \ n \gg 0.
 \]
  Clearly $\CMS(A)$ satisfies symmetry in vanishing of cohomology if $A$ is a local complete intersection of codimension $c$. See \cite{BO} for some examples of triangulated categories satisfying symmetry  in vanishing of cohomology.

  Before we state our next result we need the following:
  \begin{definition}\label{ess}
  We say a module $M$ is essentially disjoint from $X$ if $M = M_1\oplus M_2$ where $\Vc^*(M_1) \subseteq X$ and $\Vc^*(M_2) \subseteq \Pb^{c-1} \setminus X$.
  \end{definition}
  \begin{remark}
    (1) Note $M \cong M_2$ in $\Tc_X$.

    (2) In \ref{fart} we show that if $M \cong N$ in $\Tc_X$ and $M$ is essentially disjoint from $X$ then so is $N$.
  \end{remark}
  Our third application is
 \begin{theorem}
   \label{sym}
(with hypotheses as in \ref{hypothesis} and notation as in \ref{defnX}.) Let $X$ be any algebraic set in $\Pb^{c-1}(k)$. Let $M, N$ be any two MCM $A$-modules.
Consider the following two statements:
\begin{enumerate}[\rm (1)]
  \item $\Hom_{\Tc_X}(M, \Om^{-n}(N)) = 0 \ \text{for all} \ n \gg 0$.
  \item $\Vc_X(M) \cap \Vc_X(N) = \emptyset$.
\end{enumerate}
Then $(1) \implies (2)$.
If $M$ or $N$ is essentially disjoint from $X$ then $(2) \implies (1)$.
 \end{theorem}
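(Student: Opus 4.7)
The plan is to reformulate the problem in terms of Eisenbud--Gulliksen cohomology operators. Let $T=k[\chi_1,\dots,\chi_c]$ act on $\Ext^*_A(M,N)$ in the standard way; by Avramov--Buchweitz this is a finitely generated $T$-module whose support, viewed in $\Pb^{c-1}$, equals $\Vc^*(M)\cap\Vc^*(N)$. The strategy is: $(1)\Rightarrow(2)$ forces this support inside $X$, while $(2)\Rightarrow(1)$ under essential disjointness reduces directly to the classical Avramov--Buchweitz vanishing criterion.

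For $(2)\Rightarrow(1)$, say $M=M_1\oplus M_2$ with $\Vc^*(M_1)\subseteq X$ and $\Vc^*(M_2)\cap X=\emptyset$; then $M\cong M_2$ in $\Tc_X$. Hypothesis (2) gives $\Vc^*(M)\cap\Vc^*(N)\subseteq X$, so $\Vc^*(M_2)\cap\Vc^*(N)\subseteq\Vc^*(M_2)\cap X=\emptyset$. By Avramov--Buchweitz, $\Ext^n_A(M_2,N)=\Hom_{\CMS(A)}(M_2,\Om^{-n}N)=0$ for $n\gg 0$, so $\Hom_{\Tc_X}(M,\Om^{-n}N)=\Hom_{\Tc_X}(M_2,\Om^{-n}N)=0$ for $n\gg 0$. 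The case when $N$ is essentially disjoint is symmetric.

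For $(1)\Rightarrow(2)$ I use the standard characterisation in a Verdier quotient: a morphism $f$ of $\CMS(A)$ vanishes in $\Tc_X$ if and only if it factors as $M\xrightarrow{g_f}S_f\xrightarrow{h_f}\Om^{-n}N$ with $S_f\in\Sc_X$. Pick $n_0$ such that $\Hom_{\Tc_X}(M,\Om^{-n}N)=0$ for $n\geq n_0$, and choose such factorisations for every $f\in\Ext^n_A(M,N)$, $n\geq n_0$. Naturality of the Eisenbud operators in the first argument makes the pullback $g_f^*\colon\Ext^*_A(S_f,N)\to\Ext^*_A(M,N)$ a $T$-linear map, and $f=g_f^*(h_f)$; hence the cyclic $T$-submodule $T\cdot f$ sits inside $\image(g_f^*)$, whose support in $\Pb^{c-1}$ is contained in $\Vc^*(S_f)\cap\Vc^*(N)\subseteq X$.

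To conclude, set $E=\bigoplus_{n\geq n_0}\Ext^n_A(M,N)$, a finitely generated $T$-submodule of the Noetherian $T$-module $\Ext^*_A(M,N)$. Picking homogeneous $T$-generators $f_1,\dots,f_r$ of $E$, each $T\cdot f_i$ has support in $X$, so the support of $E$ in $\Pb^{c-1}$ lies in $X$; since $\Ext^*_A(M,N)/E$ is finite-dimensional over $k$ and contributes nothing to the support in $\Pb^{c-1}$, the whole module $\Ext^*_A(M,N)$ has support in $X$. By Avramov--Buchweitz this support is $\Vc^*(M)\cap\Vc^*(N)$, so $\Vc_X(M)\cap\Vc_X(N)=(\Vc^*(M)\cap\Vc^*(N))\setminus X=\emptyset$, which is (2). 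The main technical point to check is that ``factoring through $\Sc_X$'' propagates through the $T$-action; this is immediate from naturality of the Eisenbud operators but merits care in whichever concrete model of the $T$-action one fixes.
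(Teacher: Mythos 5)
Your argument for $(1)\Rightarrow(2)$ follows the same route as the paper (factor finitely many generators of the tail of $\Ext^*_A(M,N)$ through objects of $\Sc_X$ via the standard Verdier-quotient criterion, use naturality of the operators, then discard low degrees), but two of your supporting claims are false as stated. First, $\Ext^*_A(M,N)$ is not a finitely generated $T$-module and is not naturally a $T$-module at all: Gulliksen's theorem gives finite generation over $R=A[t_1,\ldots,t_c]$, and the $T$-module entering the definition of $\Vc^*(M,N)$ is $\Ext^*_A(M,N)\otimes_A k$. Second, the quotient $\Ext^*_A(M,N)/E$ by the tail is a finitely generated $A$-module in each degree but is generally \emph{not} finite-dimensional over $k$ (already $\Ext^n_A(M,N)$ can have infinite length when the singular locus is not isolated), so your final reduction "the complement of the tail contributes nothing" needs the paper's actual mechanism: obtain a surjection of $R$-modules from $\bigoplus_j\Ext^*(M,W_j)$ onto the relevant tail, tensor with $k$ so that every graded piece becomes finite-dimensional, and then observe that the finitely many low degrees are killed by a power of the irrelevant ideal $\q$ of $T$, hence are invisible in $\Pb^{c-1}$. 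These repairs are routine and exactly what the paper does, so this half is salvageable bookkeeping rather than a wrong idea.

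The genuine gap is in $(2)\Rightarrow(1)$. From $\Vc^*(M_2)\cap\Vc^*(N)=\emptyset$ you correctly get $\sHom_A(M_2,\Om^{-n}(N))=0$ for $n\gg 0$, but you then conclude $\Hom_{\Tc_X}(M_2,\Om^{-n}(N))=0$ as if vanishing in $\CMS(A)$ implied vanishing in the Verdier quotient. It does not: morphisms in $\Tc_X$ are fractions, and $\Hom_{\Tc_X}(M_2,L)$ can a priori contain maps that do not come from $\sHom_A(M_2,L)$; the natural map $\sHom_A(M_2,L)\to\Hom_{\Tc_X}(M_2,L)$ is in general neither injective nor surjective. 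This is precisely the content of the paper's Proposition \ref{khoya}, which shows the map is an isomorphism when $\Vc^*(M_2)\subseteq\Pb^{c-1}\setminus X$, and whose proof requires Lemma \ref{fart} (any $U$ linked to $M_2$ by a morphism in $\Mor_{\Sc_X}$ is again essentially disjoint from $X$, using Bergh's splitting of modules along disjoint supports and Happel's lemma) to replace an arbitrary fraction representing a morphism out of $M_2$ by an honest stable map. Without this step --- or some substitute argument controlling $\Hom$-sets in $\Tc_X$ for objects whose support avoids $X$ --- your proof of the implication $(2)\Rightarrow(1)$ is incomplete; note this is also exactly where the essential-disjointness hypothesis does its work, which your sketch never actually invokes beyond replacing $M$ by $M_2$.
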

 We make the following:
 \begin{conjecture}
   \label{sym-conj}(with hypotheses as in Theorem \ref{sym}) The assertions $(2) \implies (1)$.
hold in general.
 \end{conjecture}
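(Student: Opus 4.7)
The argument has two halves, both grounded in the standard identification, via the calculus of fractions for Verdier quotients, of ``$\alpha$ vanishes in $\Tc_X$'' with ``$\alpha$ factors through some $L \in \Sc_X$'', together with the action of the cohomology ring $S = k[\chi_1, \ldots, \chi_c]$ on $\Ext^*_A(M, N)$ established by Avramov--Buchweitz.

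For $(1) \Rightarrow (2)$, I would argue the contrapositive. Suppose $\Vc^*(M) \cap \Vc^*(N) \not\subseteq X$. For any $\alpha \in \sHom_A(M, \Om^{-n} N) = \Ext^n_A(M, N)$, the factorization criterion above implies that if the image of $\alpha$ in $\Hom_{\Tc_X}(M, \Om^{-n} N)$ vanishes, then the projective $S$-support of the cyclic submodule of $\Ext^*_A(M, N)$ generated by $\alpha$ is contained in $\Vc^*(L) \subseteq X$, for the witnessing $L$. If hypothesis $(1)$ held, every homogeneous element of $\Ext^{\geq n_0}_A(M, N)$ (for $n_0 \gg 0$) would have $S$-support in $X$; by finite generation this forces a uniform power of the ideal $I_X \subseteq S$ to annihilate the whole truncated module. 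Since truncation preserves the associated sheaf on $\Pb^{c-1}$, we would conclude $\Vc^*(M) \cap \Vc^*(N) \subseteq X$, contradicting the assumption.

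For $(2) \Rightarrow (1)$, assume WLOG that $M$ is essentially disjoint from $X$ and write $M = M_1 \oplus M_2$ with $\Vc^*(M_1) \subseteq X$ and $\Vc^*(M_2) \cap X = \emptyset$. Then $M_1 \in \Sc_X$, so $M \cong M_2$ in $\Tc_X$; the hypothesis $\Vc_X(M) \cap \Vc_X(N) = \emptyset$ together with $\Vc^*(M_2) \cap X = \emptyset$ forces $\Vc^*(M_2) \cap \Vc^*(N) = \emptyset$, and classical Avramov--Buchweitz then yields $\sHom_A(M_2, \Om^{-n} N) = 0$ for all $n \gg 0$. An arbitrary class in $\Hom_{\Tc_X}(M_2, \Om^{-n} N)$ will be represented by a co-roof $M_2 \xrightarrow{f} Z \xleftarrow{s} \Om^{-n} N$ with $C := \mathrm{cone}(s) \in \Sc_X$. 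Applying $\sHom_A(M_2, -)$ to the triangle $\Om^{-n} N \to Z \to C \to \Om^{-(n+1)} N$ and using the vanishing of $\sHom_A(M_2, \Om^{-m} N)$ in degrees $m=n,n+1$, one sees that the induced map $q_* \colon \sHom_A(M_2, Z) \hookrightarrow \sHom_A(M_2, C)$ is an $S$-linear injection; hence $f$ and $g := qf$ share an $S$-annihilator, and since $g$ is a morphism into $C \in \Sc_X$ the $S$-support of $g$, and therefore of $f$, lies in $X$.

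The main technical obstacle is the \emph{converse} of the factorization criterion used in the first half: given that $f \in \sHom_A(M_2, Z)$ satisfies $\mathrm{ann}_S(f) \supseteq I_X^k$ for some $k$, I must produce an actual factorization $f = (L \to Z) \circ (M_2 \to L)$ through some $L \in \Sc_X$. I would establish this recognition principle via Eisenbud--Shamash / matrix-factorization techniques, where a power of $I_X$ annihilating $f$ supplies the extension classes needed to realize $L$ as the total complex of a Koszul-style construction with support in $X$. Once this realization is in hand, $f$ becomes zero in $\Tc_X$, the co-roof is equivalent to the zero morphism, and the desired vanishing $\Hom_{\Tc_X}(M_2, \Om^{-n} N) = 0$ for $n \gg 0$ follows.
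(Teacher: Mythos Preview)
The statement you are attempting to prove is explicitly a \emph{conjecture} in the paper; there is no proof of it to compare against. The authors state in the remark immediately following it that they are unable to prove $(2)\Rightarrow(1)$ in general because they lack a cohomological criterion for the support variety of a pair in $\Tc_X$.

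Your proposal does not close this gap. In the paragraph for $(2)\Rightarrow(1)$ you write ``assume WLOG that $M$ is essentially disjoint from $X$''. That ``WLOG'' is exactly the hypothesis the conjecture is trying to eliminate, and nothing you have written justifies it: an arbitrary MCM module need not split as $M_1\oplus M_2$ with $\Vc^*(M_1)\subseteq X$ and $\Vc^*(M_2)\subseteq \Pb^{c-1}\setminus X$. So what you have outlined is at best an alternative argument for the special case already handled in Theorem~\ref{sym}, not a proof of Conjecture~\ref{sym-conj}.

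Even restricted to that special case, your route is more circuitous than the paper's and leaves a real hole. The paper's argument (Proposition~\ref{khoya}) shows directly that when $\Vc^*(M_2)\cap X=\emptyset$ the localization map $\sHom_A(M_2,L)\to \Hom_{\Tc_X}(M_2,L)$ is an isomorphism for \emph{every} $L$, by rewriting fractions so that the zig-zag object lies entirely outside $X$ (using Lemma~\ref{fart}) and then observing that the remaining leg is already an isomorphism in $\CMS(A)$. With this in hand, $\Hom_{\Tc_X}(M_2,\Om^{-n}N)=\sHom_A(M_2,\Om^{-n}N)=0$ for all $n\in\Z$, not merely $n\gg 0$. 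Your approach instead tries to push a given $f\colon M_2\to Z$ into $\Sc_X$ by a ``recognition principle'' (annihilation by a power of $I_X$ implies factorization through some $L\in\Sc_X$) which you do not prove and which is not standard; the Eisenbud--Shamash machinery does not obviously deliver such a statement for maps into an arbitrary $Z$. So the claimed technical obstacle is genuinely unresolved in your write-up, and it is in any case unnecessary for the essentially-disjoint case.
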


 \begin{remark}
   The main reason we are unable to prove Conjecture \ref{sym-conj} is that we do not have a good notion of support variety for  a pair of objects $M, N$ in $\Tc_X$. Philosophically it should be $\Vc_X(M) \cap \Vc_X(N)$ but we do not have a cohomological criterion for support varieties of the pair $M, N$.
 \end{remark}

\s \label{defnC} We give another class of Verdier-quotient's of $\CMS(A)$ for which GAR and Muthy's property holds. Let $\cx_A M$ denote the complexity of a module $M$.
 For $i =   1,\ldots, c-1$ let
 $$\CMS_{\leq i}(A) = \{ M \mid \text{ $M$ is MCM and } \ \cx_A M \leq i \}.$$
 Then it is easy to check that  $\CMS_{\leq i}(A)$ is a thick subcategory of $\CMS(A)$. Set
 $$ \Tc_i = \CMS(A)/\CMS_{\leq i}(A). $$
 If $Y$ is a variety in $\Pb^{c-1}$, write
 $$ Y = \bigcup_{j = 1}^{m} Y_j \quad \text{where} \ Y_j \ \text{is irreducible}.$$
 Assume $\dim Y_j = \dim Y$ for $1\leq j \leq r$ and $\dim Y_j < \dim Y$ for $j > r$.
 Set
 \[
 \topv(Y) = \bigcup_{i = 1}^{r} Y_i.
 \]
 It is easy to prove that $\topv(Y)$ is an invariant of $Y$.
 We define support variety of an object $M$ in $\Tc_i$ as follows
 \[
 \Vc_i(M)  = \begin{cases}
               \topv(\Vc^*(M)), & \mbox{if } \ \cx_A M > i; \\
               \emptyset, & \mbox{otherwise}.
             \end{cases}
 \]
 In Corollary \ref{def-supp-cmi} we show that this is a well-defined notion in $\Tc_i$.
 We show
  \begin{theorem}
   \label{GARC}
(with hypotheses as in \ref{hypothesis} and notation as in \ref{defnC}.)
 For $i = 1,\ldots, c-1$
 the triangulated category $\Tc_i$ satisfies GAR.
 \end{theorem}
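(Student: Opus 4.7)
The plan is to prove the contrapositive: assuming $M \in \CMS(A)$ satisfies $\Hom_{\Tc_i}(M, \Om^{-n}(M)) = 0$ for all $n \geq N$, I will deduce $\cx_A M \leq i$, so $M \cong 0$ in $\Tc_i$. The first step is the standard characterization of zero morphisms in a Verdier quotient by a thick subcategory: a morphism $f$ of $\CMS(A)$ becomes zero in $\Tc_i$ if and only if $f$ factors through some object of $\CMS_{\leq i}(A)$. Via the identification $\Hom_{\CMS(A)}(M, \Om^{-n}(M)) = \Ext^n_A(M, M)$, the hypothesis then says that every element of $\Ext^n_A(M, M)$ factors through some MCM module of complexity $\leq i$, for every $n \geq N$.

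The core of the argument is a module-theoretic analysis over the cohomology-operator ring $R = k[\chi_1, \ldots, \chi_c]$. By Gulliksen--Avramov, $E := \Ext^*_A(M, M)$ is a finitely generated graded $R$-module with $\dim_R E = \cx_A M$ and support $\Vc(M)$. Let $J_n \subseteq \Ext^n_A(M, M)$ be the set of maps factoring through some $K \in \CMS_{\leq i}(A)$, and set $J = \bigoplus_n J_n$. Using naturality of the Eisenbud operators with respect to Yoneda composition, so that $\chi(gh) = (\chi g)h = g(\chi h)$, the subset $J$ is a graded $R$-submodule of $E$. By Noetherianity, $J$ admits finitely many generators $\eta_j = g_j \circ h_j$ ($j = 1, \ldots, r$), with $h_j : M \to K_j$, $g_j \in \Ext^*_A(K_j, M)$, and each $K_j \in \CMS_{\leq i}(A)$. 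Since $R\eta_j$ is a quotient of $R g_j \subseteq \Ext^*_A(K_j, M)$, whose support equals $\Vc(K_j) \cap \Vc(M)$ of dimension at most $\cx_A K_j \leq i$ by Avramov--Buchweitz, we conclude $\dim_R J \leq i$.

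To finish, the hypothesis forces $J_n = \Ext^n_A(M, M)$ for $n \geq N$, so $E/J$ is concentrated in degrees $< N$ and hence of finite length over $R$. The exact sequence $0 \to J \to E \to E/J \to 0$ gives $\cx_A M = \dim_R E = \max(\dim_R J, \dim_R(E/J)) \leq i$, as required. I expect the main technical obstacle to be the verification that $J$ is stable under the $R$-action: this depends on naturality of the Eisenbud cohomology operators with respect to Yoneda composition, and it is precisely this fact (which encodes the complete-intersection structure of $A$) that allows Noetherianity to yield a finite set of generators and hence the clean dimension bound on the support of $J$.
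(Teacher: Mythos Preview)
Your argument is correct and follows essentially the same route as the paper: the paper proves the pair version (Theorem~\ref{convo}, whose proof mirrors Theorem~\ref{sym-main}) by factoring a finite set of generators of $\Ext^{\geq m}_A(M,N)$ through objects $W_j\in\CMS_{\leq i}(A)$, using the $R$-linearity of the induced maps $w_j^*$ to get a surjection $\bigoplus_j\Ext^*_A(M,W_j)\twoheadrightarrow E_{\geq m}$, and then comparing supports --- which is exactly your submodule $J$ packaged slightly differently --- and then specializes to $M=N$. One small correction: $\Ext^*_A(M,M)$ is not a module over $k[\chi_1,\ldots,\chi_c]$ and its Krull dimension over $A[\chi_1,\ldots,\chi_c]$ need not equal $\cx_A M$ when $\dim A>0$; you should work over $R=A[\chi_1,\ldots,\chi_c]$ and pass to $E\otimes_A k$ over $T=k[\chi_1,\ldots,\chi_c]$ before reading off dimensions and support varieties, as the paper does.
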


 Next we show:
 \begin{theorem}
   \label{murthyC}
(with hypotheses as in \ref{hypothesis} and notation as in \ref{defnC}.)
 For $i = 1,\ldots, c-1$
 the triangulated category $\Tc_i$ satisfies Murthy's property with order $c-i$.
 \end{theorem}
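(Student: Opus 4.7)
The plan is to attach to the pair $(M,N)$ a graded module
\[
E^{*}(M,N) := \bigoplus_{n \geq 0} \Hom_{\Tc_{i}}(M, \Om^{-n} N)
\]
over Gulliksen's polynomial ring $\mathcal{R} = k[\chi_{1}, \ldots, \chi_{c}]$ of cohomology operators (with $\chi_{j}$ of degree $2$), bound the Krull dimension of its $\mathcal{R}$-support by $c-i$, and then invoke the same machinery that gives the classical Murthy property in the codimension-$(c-i)$ setting. The first step is to describe morphisms in $\Tc_{i}$ concretely. Every element of $\Hom_{\Tc_{i}}(M, \Om^{-n}N)$ is represented by a roof $M \xleftarrow{s} X \to \Om^{-n}N$ whose cone lies in $\CMS_{\leq i}(A)$; rotating yields a triangle $X \to M \to C \to \Om^{-1}X$ with $\cx_{A} C \leq i$. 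Using that $\CMS_{\leq i}(A)$ is thick, I would identify the kernel of the canonical comparison map $\Ext^{n}_{A}(M,N) \to E^{n}(M,N)$ with the $\mathcal{R}$-submodule of those Ext classes that factor through some MCM module of complexity at most $i$. Because Gulliksen's operators preserve this submodule by Yoneda functoriality, $E^{*}(M,N)$ inherits the structure of a finitely generated graded $\mathcal{R}$-module.

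The key step is to show that $\ann_{\mathcal{R}} E^{*}(M,N)$ has height at least $i$. Informally, passing to $\Tc_{i}$ kills the ``$i$ low-dimensional directions'' of cohomology in $\Pb^{c-1}$. Concretely, I would combine Bergh's realisation theorem \cite[2.3]{B}---every projective algebraic set in $\Pb^{c-1}$ of dimension at most $i-1$ equals $\Vc^{*}(C)$ for some MCM module $C$ with $\cx_{A}C \leq i$---with a Carlson-type cone construction to show that every homogeneous Ext class whose $\mathcal{R}$-annihilator defines a subvariety of $\Pb^{c-1}$ of dimension at most $i-1$ factors through such a $C$ and hence vanishes in $E^{*}(M,N)$. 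After summing over all such low-dimensional supports, the projective support of $E^{*}$ in $\operatorname{Proj}\mathcal{R} = \Pb^{c-1}$ has dimension at most $c-i-1$, equivalently, the Krull dimension of $E^{*}(M,N)$ as an $\mathcal{R}$-module is at most $c-i$.

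With this Krull dimension bound in hand, one concludes exactly as in the classical proof of Murthy's property in codimension $c-i$ on $\CMS$ of a codimension-$(c-i)$ complete intersection: after Noether normalisation one obtains a polynomial subring on $c-i$ degree-$2$ variables over which $E^{*}(M,N)$ is finitely generated, and Murthy's argument then shows that $c-i+1$ consecutive vanishing graded components at indices $\geq m$ force $E^{n}(M,N) = 0$ for every $n \geq m$. The \textbf{main obstacle} is the height estimate in the second paragraph: producing an explicit Yoneda factorisation of a given Ext class through an MCM module of prescribed low-dimensional support is delicate, and is the only place in the argument where the hypothesis that $k$ is algebraically closed plays an essential role (entering through the realisation theorem \cite[2.3]{B}).
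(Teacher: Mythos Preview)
Your approach has two genuine gaps. First, the comparison map $\Ext^n_A(M,N) \to \Hom_{\Tc_i}(M,\Om^{-n}N)$ is in general not surjective: morphisms in a Verdier quotient are roofs, not merely images of morphisms from $\CMS(A)$, so $E^*(M,N)$ is not a quotient of $\Ext^*_A(M,N)$ and there is no reason for it to be a finitely generated $\mathcal{R}$-module (the paper explicitly warns that Hom-sets in these quotients need not be finitely generated as $A$-modules). You would also have to check that the Eisenbud operators descend to well-defined maps on $\Hom_{\Tc_i}$, which is not automatic. Second, and more seriously, your dimension argument runs in the wrong direction. If every Ext class whose support in $\Pb^{c-1}$ has dimension $\leq i-1$ dies in $E^*$, then the \emph{surviving} classes have supports of dimension $\geq i$; this would force $\dim_{\mathcal{R}} E^*(M,N) \geq i$ whenever $E^*\neq 0$, not $\leq c-i$. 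Killing the low-dimensional torsion of a graded module never lowers its Krull dimension. For instance with $M = N = k$ one has $\Vc^*(k) = \Pb^{c-1}$ and $k \neq 0$ in $\Tc_i$, so there is no hope that $E^*(k,k)$ has Krull dimension $\leq c-i$.

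The paper sidesteps both issues by never putting an $\mathcal{R}$-module structure on $\bigoplus_n \Hom_{\Tc_i}(M,\Om^{-n}N)$. Instead it inducts on $r = \cx_A N$: using the short exact sequence $0 \to K \to \Om^{n+2}(N) \to \Om^n(N) \to 0$ with $\cx_A K = r-1$ one obtains, after rotation, a triangle $\Om^2(N) \to N \to E \to \Om(N)$ in $\CMS(A)$ with $\cx_A E = r-1$. Applying $\Hom_{\Tc_i}(M,-)$ to its image in $\Tc_i$ and using the induction hypothesis on $E$ propagates the vanishing one step further at each stage. The base case $r = i+1$ uses that $K = 0$ in $\Tc_i$, so $N$ becomes $2$-periodic there and a single pair of consecutive vanishings suffices; for $r \leq i$ one has $N = 0$ in $\Tc_i$.
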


 Huneke and Wiegand, \cite[1.9]{HW} showed that if $A$ is a hypersurface ring and $M,N$ are MCM $A$-modules then
 for some $i\geq 1$ we have
 \[
 \Tor^A_i(M, N) = \Tor^A_{i+1}(M, N) = 0 \quad \text{ then either $M$ or $N$ is free.}
 \]
 Similar result holds for $\Ext(-,-)$.
 Note that any MCM module over a hypersurface is two periodic, i.e., $\Om^2(M) = M$
for any MCM $A$-module $M$ with no free summands. We say a triangulated category $\Tc$ is two periodic if $\Sigma^2(U) = U$ for any object $U$ in $\Tc$. We say a two periodic triangulated category
$\Tc$ has the Huneke-Wiegand property if
\[
\Hom_\Tc(U,V) = \Hom_\Tc(U, \Sigma(V)) = 0 \quad \implies U = 0 \ \text{or} \ V = 0.
\]
In an earlier paper we showed that $\Tc_{c-1}$ is $2$-periodic, see \cite[3.1]{P}
In this paper we show:
\begin{theorem}
   \label{HW}
(with hypotheses as in \ref{hypothesis} and notation as in \ref{defnC}.)
 The triangulated category $\Tc_{c-1}$ has the Huneke-Wiegand property.
 \end{theorem}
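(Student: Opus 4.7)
The plan is to argue by contradiction. Suppose $M \neq 0$ and $N \neq 0$ in $\Tc_{c-1}$; unravelling, this means $M, N \notin \CMS_{\leq c-1}(A)$, so $\cx_A M = \cx_A N = c$. Since $\dim \Vc^*(E) = \cx_A E - 1$ for any MCM module $E$ and since $\Pb^{c-1}$ is irreducible of dimension $c-1$, both $\Vc^*(M)$ and $\Vc^*(N)$ must fill all of $\Pb^{c-1}$. Hence $\Vc_{c-1}(M) = \topv(\Vc^*(M)) = \Pb^{c-1}$ and $\Vc_{c-1}(N) = \Pb^{c-1}$, which gives $\Vc_{c-1}(M) \cap \Vc_{c-1}(N) = \Pb^{c-1} \neq \emptyset$.

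By the $2$-periodicity of $\Tc_{c-1}$ established in \cite[3.1]{P}, the pair of vanishings $\Hom_{\Tc_{c-1}}(M,N) = \Hom_{\Tc_{c-1}}(M,\Om^{-1}N) = 0$ propagates to $\Hom_{\Tc_{c-1}}(M,\Om^{-n}N) = 0$ for every $n \in \Z$. The theorem therefore reduces to establishing the following $\Tc_{c-1}$-analogue of the implication $(1)\Rightarrow(2)$ in Theorem \ref{sym}: \emph{if $\Hom_{\Tc_{c-1}}(M,\Om^{-n}N) = 0$ for all $n \gg 0$, then $\Vc_{c-1}(M) \cap \Vc_{c-1}(N) = \emptyset$}. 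Together with the first paragraph this produces the contradiction $\Pb^{c-1} = \emptyset$.

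I would prove the required analogue by contrapositive, closely paralleling the argument for Theorem \ref{sym}(1)$\Rightarrow$(2). Fix a point $\xi \in \topv(\Vc^*(M)) \cap \topv(\Vc^*(N))$. By Avramov-Buchweitz, $\xi$ belongs to the cohomological support variety $\Vc^*(M,N) = \Vc^*(M) \cap \Vc^*(N)$, so $\Ext^n_A(M,N) \neq 0$ for infinitely many $n$ and has nonzero local support at $\xi$ of top dimension. A standard fact about Verdier quotients says that a morphism $f\colon M \to \Om^{-n}N$ vanishes in $\Tc_{c-1}$ if and only if it factors through an object of $\CMS_{\leq c-1}(A)$. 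Any factorization $f = h \circ g$ with $g\colon M \to X$, $h\colon X \to \Om^{-n}N$, and $\cx_A X \leq c - 1$ forces the class of $f$ to lie in the image of the composition pairing, which is supported inside $\Vc^*(X)$ of dimension $\leq c-2$. Since the class supported at top-dimensional $\xi$ cannot be absorbed this way, its image in $\Hom_{\Tc_{c-1}}(M,\Om^{-n}N)$ is nonzero for some large $n$, contradicting the hypothesis.

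The main obstacle is this last support-propagation step: rigorously showing that the image of a composition pairing through $X$ of smaller complexity is supported inside $\Vc^*(X)$, and consequently that classes supported in the top-dimensional stratum of $\Vc^*(M) \cap \Vc^*(N)$ survive Verdier localization at $\CMS_{\leq c-1}(A)$. I expect this to follow from a careful analysis of the $k[\chi_1,\ldots,\chi_c]$-module structure on $\Ext^*_A(-,-)$ and its compatibility with composition, in the same spirit as the support-variety manipulations used for $\Tc_X$ earlier in the paper. Once this compatibility is recorded, the chain of reductions above completes the proof of Theorem \ref{HW}.
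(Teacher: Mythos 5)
Your overall skeleton coincides with the paper's argument: assume $M,N\neq 0$ in $\Tc_{c-1}$, deduce $\cx_A M=\cx_A N=c$ and hence $\Vc^*(M)=\Vc^*(N)=\Pb^{c-1}$, use $2$-periodicity of $\Tc_{c-1}$ to upgrade the two vanishings to $\Hom_{\Tc_{c-1}}(M,\Om^{-n}(N))=0$ for all $n\gg 0$, and then invoke a ``vanishing of $\Hom$ in the quotient forces small intersection of supports'' statement to get a contradiction with $\dim\Pb^{c-1}=c-1$. This is exactly how the paper proceeds, except that the paper has already proved the needed statement (its Theorem \ref{convo}: vanishing for $n\gg 0$ implies $\dim(\Vc_i(M)\cap\Vc_i(N))\leq i-1$, which suffices here since the intersection is all of $\Pb^{c-1}$), whereas you must supply it yourself --- and this is precisely where your proposal has a genuine gap.

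Your sketch of that key step is not a proof, and you say so yourself (``The main obstacle is this last support-propagation step \dots I expect this to follow''). The per-element heuristic --- that a single class $f\in\Ext^n_A(M,N)$ factoring through some $X$ with $\cx_A X\leq c-1$ is ``supported inside $\Vc^*(X)$'' and that a class ``supported at a top-dimensional point $\xi$ cannot be absorbed'' --- does not make literal sense: support varieties are attached to graded modules over $T=k[t_1,\ldots,t_c]$, not to individual Ext classes, and knowing that each element separately factors through some small-complexity object gives no control by itself. The mechanism the paper uses (in Theorem \ref{sym-main}, then transported verbatim to $\Tc_i$ in Theorem \ref{convo}) is: by Gulliksen, $\Ext^*_A(M,N)$ is a finitely generated module over the ring of cohomology operators $R=A[t_1,\ldots,t_c]$; choose finitely many generators of the even and odd parts in degrees $\geq 2m$; each generator, viewed in $\sHom_A(M,\Om^{-2m}(N))$ (resp.\ $\Om^{-2m-1}(N)$), dies in the quotient and hence factors through some $W_j$ with $\cx_A W_j\leq c-1$; this yields a surjection $\bigoplus_j\Ext^*_A(M,W_j)\twoheadrightarrow \Ext^*_A(M,N)_{\geq 2m}$ of $R$-modules, and an annihilator computation over $T$ then bounds $\dim\bigl(\Vc^*(M)\cap\Vc^*(N)\bigr)$ by $\max_j\dim\Vc^*(W_j)\leq c-2$, contradicting $\Vc^*(M)\cap\Vc^*(N)=\Pb^{c-1}$. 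Without this finite-generation/annihilator argument your chain of reductions does not close; note also that what is provable this way is the dimension bound, not the emptiness of $\Vc_{c-1}(M)\cap\Vc_{c-1}(N)$ you state (the two happen to coincide in the present situation, so that discrepancy is harmless, but the missing step is not).
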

\section{Preliminaries}
In this paper all rings will be Noetherian local. All  modules considered are \textit{finitely} generated unless otherwise stated.  
However note that Home-sets in Verdier quotients need not be finitely generated.

Let $(A,\m)$ be a local ring and let $k = A/\m$ be its residue field. Let $\dim A = d$. If $M$ is an $A$-module then $\mu(M) = \dim_{k} M/\m M$ is the number of a minimal generating set of $M$. Also let $\ell(M)$ denote its length.  In this section we discuss a few preliminary results that we need.

\s \label{P} Let $M$ be an $A$-module. For $i \geq 0$  let $\beta_i(M) = \dim_k \Tor^A_i(M, k)$ be its $i^{th}$ \emph{betti}-number.  Let $P_M(z) = \sum_{n \geq 0}\beta_n(M)z^n$, the \emph{Poincare series} of $M$.
 Set
 \[
 \cx(M) = \inf \{ r \mid \limsup \frac{\beta_n(M)}{n^{r-1}}  < \infty \}  \quad \text{and}
 \]
 It is possible that $\cx(M) = \infty$, see \cite[4.2.2]{A-Inf}.

 \s It can be shown that for any $A$-module $M$ we have
 \[
 \cx(M) \leq \cx(k) \quad \text{see \cite[4.2.4]{A-Inf}.}
 \]

 \s \label{ext-ci} If $A$  is a complete intersection of co-dimension  $c$ then for any $A$-module $M$ we have $\cx(M) \leq c$. Furthermore for each $i = 0,\ldots,c$ there exists an $A$-module $M_i$ with $\cx(M_i) = i$. Also note that $\cx(k) = c$. \cite[8.1.1(2)]{A-Inf}.

\s \emph{The stable category of a Gorenstein local ring:}\\
Let $(A,\m)$ be a  commutative Gorenstein local ring with residue field $k$. Let $\CMa(A)$ denote the full subcategory of  maximal \CM \ (= MCM) $A$-modules  and let $\CMS(A)$ denote the stable category of MCM $A$-modules. Recall that objects in $\CMS(A)$ are same as objects in $\CMa(A)$. However the set of morphisms $\underline{\Hom_A}(M,N)$ between $M$ and $N$ is  $= \Hom_A(M,N)/P(M,N)$ where $P(M,N)$ is the set of $A$-linear maps from $M$ to $N$ which factor through a  free module. It is well-known that $\CMS(A)$ is  a triangulated category with translation functor $\Omega^{-1}$,  (see \cite[4.7]{Bu}; cf. \ref{T-struc}).  Here $\Omega(M)$ denotes the syzygy of $M$ and $\Omega^{-1}(M)$ denotes the co-syzygy of $M$. Also recall that an object $M$ is zero in $\CMS(A)$ if and only if it is free  considered as an $A$-module. Furthermore $M \cong N$ in $\CMS(A)$ if and only
if there exists finitely generated free modules $F,G$ with $M\oplus F \cong N \oplus G$ as $A$-modules.

\s\label{T-struc} \emph{Triangulated category structure on $\CMS(A)$}. \\
We first describe
basic exact triangle. Let $f \colon M \rt N$ be a morphism in $\CMa(A)$. Note we have an exact sequence $0 \rt M \xrightarrow{i} Q \rt \Om^{-1}(M) \rt 0$, with $Q$-free. Let $C(f)$ be the pushout of $f$ and $i$. Thus we have a commutative diagram with exact rows
\[
  \xymatrix
{
 0
 \ar@{->}[r]
  & M
\ar@{->}[r]^{i}
\ar@{->}[d]^{f}
 & Q
\ar@{->}[r]^{p}
\ar@{->}[d]
& \Om^{-1}(M)
\ar@{->}[r]
\ar@{->}[d]^{j}
&0
\\
 0
 \ar@{->}[r]
  &N
\ar@{->}[r]^{i^\prime}
 & C(f)
\ar@{->}[r]^{p^\prime}
& \Om^{-1}(M)
    \ar@{->}[r]
    &0
\
 }
\]
Here $j$ is the identity map on $\Om^{-1}(M)$.
As $N, \Om^{-1}(M) \in \CMa(A)$  it follows that $C(f) \in \CMa(A)$.
Then the projection of the sequence
$$ M\xrightarrow{f} N \xrightarrow{i^\prime} C(f) \xrightarrow{-p^\prime} \Omega^{-1}(M)$$
in $\CMS(A)$  is a basic exact triangle. Exact triangles in $\CMS(A)$ are triangles isomorphic to a basic exact triangle.

\s By construction of exact triangles in $\CMS(A)$ it follows that if
\[
M \rt N \rt L \rt \Om^{-1}(M)
\]
is an exact triangle in $\CMS(A)$ then we have a short exact sequence
\[
0 \rt N \rt E \rt \Om^{-1}(M) \rt 0,
\]
where $E \cong L$ in $\CMS(A)$.

\s \textit{Support varieties of modules over local complete intersections:}\\
 This is relatively simple in our case since $A$ is complete with algebraically closed residue field.

\s Let $A = Q/(\mathbf{u})$ where $(Q,\n)$ is a complete regular local ring and \\ $\mathbf{u} = u_1,\ldots, u_c \in \n^2$ is   a regular sequence.
We need the notion of cohomological operators over a  complete intersection ring.

The \emph{Eisenbud operators}, \cite{E}  are constructed as follows: \\
Let $\mathbb{F} \colon \cdots \rightarrow F_{i+2} \xrightarrow{\partial} F_{i+1} \xrightarrow{\partial} F_i \rightarrow \cdots$ be a complex of free
$A$-modules.

\emph{Step 1:} Choose a sequence of free $Q$-modules $\wt{F}_i$ and maps $\wt{\partial}$ between them:
\[
\wt{\mathbb{F}} \colon \cdots \rightarrow \wt{F}_{i+2} \xrightarrow{\wt{\partial}} \wt{F}_{i+1} \xrightarrow{\wt{\partial}} \wt{F}_i \rightarrow \cdots
\]
so that $\mathbb{F} = A\otimes\wt{\mathbb{F}}$.

\emph{Step 2:} Since $\wt{\partial}^2 \equiv 0 \ \text{modulo} \ (\mathbf{u})$, we may write  $\wt{\partial}^2  = \sum_{j= 1}^{c} u_j\wt{t}_j$ where
$\wt{t_j} \colon \wt{F}_i \rightarrow \wt{F}_{i-2}$ are linear maps for every $i$.

 \emph{Step 3:}
Define, for $j = 1,\ldots,c$ the map $t_j = t_j(Q, \mathbf{f},\mathbb{F}) \colon \mathbb{F} \rightarrow \mathbb{F}(-2)$ by $t_j = A\otimes\wt{t}_j$.

\s
The operators $t_1,\ldots,t_c$ are called \emph{Eisenbud's operator's} (associated to $\mathbf{u}$) .  It can be shown that
\begin{enumerate}
\item
$t_i$ are uniquely determined up to homotopy.
\item
$t_i, t_j$ commute up to homotopy.
\end{enumerate}
\s Let $R = A[t_1,\ldots,t_c]$ be a polynomial ring over $A$ with variables $t_1,\ldots,t_c$ of degree $2$. Let $M, N$ be  finitely generated $A$-modules. By considering a free resolution $\mathbb{F}$ of $M$ we get well defined maps
\[
t_j \colon \Ext^{n}_{A}(M,N) \rightarrow \Ext^{n+2}_{R}(M,N) \quad \ \text{for} \ 1 \leq j \leq c  \ \text{and all} \  n,
\]
which turn $\Ext_A^*(M,N) = \bigoplus_{i \geq 0} \Ext^i_A(M,N)$ into a module over $R$. Furthermore these structure depend  on $\mathbf{u}$, are natural in both module arguments and commute with the connecting maps induced by short exact sequences.

\s\label{supp}  Gulliksen, \cite[3.1]{Gull},  proved that
$\Ext_A^*(M,N) $ is a finitely generated $R$-module. We note that $\Ext^*(M,k)$ is a finitely generated graded module over $T = k[t_1,\ldots, t_c]$. Define
$\Vc^*(M) = Var(\ann_T(\Ext^*(M,k))$ in the projective space $\mathbb{P}^{c-1}$. We call
$\Vc^*(M)$ the support variety of a module $M$. \emph{Note that in \cite{avr-b} support varieties are defined as $Var(\ann_T(\Ext^*(M,k))$ in
$k^c$. However as the ideal involved is homogeneous we get a similar notion.}
Set $\Vc^*(M,N) = Var(\ann_T(\Ext^*(M,N)\otimes k))$ in $\Pb^{c-1}$. Then in \cite[5.6]{avr-b} it is proved that $\Vc^*(M, N) =\Vc^*(M)\cap \Vc^*(N)$.

\s Set $\dim \emptyset = -1$. For any $A$-module $M$ it is clear that $\dim \Vc^*(M) = \cx_A M - 1$.

\s \label{ses}[\cite[5.6]{avr-b}:]
If $0\rt M_1 \rt M_2 \rt M_3 \rt 0$ is a short exact sequence and $N$ is any $A$-module then for $\{ i, j, h \} = \{ 1, 2, 3 \}$ we have
\[
\Vc^*(M_i,N) \subseteq \Vc^*(M_j,N) \cup \Vc^*(M_h,N).
\]

\s\label{ind} Let $a \in \Pb^{c-1}$. Define
\[
\Ic_a = \left \{  M \mid M \ \text{is MCM  and }\ \Vc^*(M) = \{ a \}                    \right \}.
\]
By \cite[2.3]{B} there exists $N$ with $\Vc^*(N) = \{ a \}$. Then $\Om^d(N) \in \Ic_a$. Thus $\Ic_a \neq \emptyset$ for all $a \in \Pb^{c-1}$.
We call $\Ic_a$ the class of indicator MCM's for $a \in \Pb^{c-1}$. The utility of $\Ic_a$ will be apparent in the next section.
We will need the following result:
\begin{lemma}\label{ind-lem}
(with notation as in \ref{ind}). For each $a \in \Pb^{c-1}$ choose any  $N_a \in \Ic_a$. Then for an $A$-module $M$ the following conditions are equivalent:
\begin{enumerate}[\rm (i)]
  \item  $a \in \Vc^*(M)$.
  \item $\Ext^*_A(M, N_a) \neq 0$.
  \item $\Ext^*_A(N_a, M) \neq 0$.
\end{enumerate}
\end{lemma}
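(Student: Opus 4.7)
My plan is to reduce the lemma to two ingredients already available in the excerpt: the Avramov--Buchweitz product formula $\Vc^*(M,N) = \Vc^*(M) \cap \Vc^*(N)$ from \ref{supp}, and the standard Nakayama dictionary between $\Vc^*(M,N)$ and the graded module $\Ext^*_A(M,N)$. Because $\Vc^*(N_a) = \{a\}$ by the very definition of the indicator class $\Ic_a$, the product formula specialises to
\[
\Vc^*(M, N_a) \;=\; \Vc^*(N_a, M) \;=\; \Vc^*(M) \cap \{a\},
\]
which is $\{a\}$ when $a \in \Vc^*(M)$ and empty otherwise. The same equality drives both (i)$\Leftrightarrow$(ii) and (i)$\Leftrightarrow$(iii), so the symmetry between (ii) and (iii) is automatic once (i)$\Leftrightarrow$(ii) is established.

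The substantive step is converting (non)emptiness of $\Vc^*(M, N_a)$ into (non)vanishing of $\Ext^*_A(M, N_a)$. By Gulliksen $\Ext^*_A(M, N_a)$ is a finitely generated $R$-module, so $E := \Ext^*_A(M, N_a) \otimes_A k$ is a finitely generated graded $T$-module whose projective variety in $\Pb^{c-1}$ is by definition $\Vc^*(M, N_a)$. If $a \in \Vc^*(M)$ then $\Vc^*(M, N_a) \neq \emptyset$, so $\ann_T E$ contains no power of $T_+$; in particular $E \neq 0$, and graded Nakayama over $R$ forces $\Ext^*_A(M, N_a) \neq 0$. Conversely if $a \notin \Vc^*(M)$ then $\Vc^*(M, N_a) = \emptyset$ forces $\sqrt{\ann_T E} = T_+$, so $E$ has finite length over $T$; a second Nakayama application, this time at the $A$-module level, then yields $\Ext^i_A(M, N_a) = 0$ for all $i \gg 0$, which is the relevant meaning of nonvanishing in the support-variety setting (and shifting $M$ to a sufficiently high syzygy $\Om^n M$, which has the same variety, upgrades this to the fully-graded statement if desired).

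The main obstacle, such as it is, is packaging the two Nakayama passages cleanly: translating nonemptiness of $Var(\ann_T E)$ in $\Pb^{c-1}$ into $E \neq 0$ (via Nullstellensatz), and then descending from $E \neq 0$ back to $\Ext^*_A(M, N_a) \neq 0$ (via graded Nakayama over $R$). Both are routine given the finitely-generated structure furnished by Gulliksen; everything else is a direct quotation of Avramov--Buchweitz, so the expected write-up is very short.
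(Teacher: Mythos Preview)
Your proposal is correct and follows the same route as the paper: both reduce immediately to the Avramov--Buchweitz identity $\Vc^*(M, N_a) = \Vc^*(N_a, M) = \Vc^*(M) \cap \{a\}$. The paper's proof consists of that single displayed equality and nothing more, leaving the Nullstellensatz/Nakayama passage between (non)emptiness of the variety and (non)vanishing of $\Ext^*$ entirely implicit; the extra paragraph you spend on that translation is correct but is exactly the routine bookkeeping the paper chose to suppress.
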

\begin{proof}
  The result follows since for any $b \in \Pb^{c-1}$ we have
  \[
  \Vc^*(M, N_b) = \Vc^*(N_b, M)  =  \Vc^*(M)\cap \Vc^*(N_b) = \Vc^*(M) \cap \{ b \}.
  \]
\end{proof}

\s Let $\Tc$ be a triangulated category and let $\Sc$ be a thick subcategory of $\Tc$. Let $\Mor_\Sc$ denote the collections of morphisms $f \colon X \rt Y$ such that the cone of $f$ is in $\Sc$. Recall the Verdier quotient $\Tc/\Sc$ is obtained by formally inverting all morphisms in $\Mor_\Sc$; see \cite[Chapter 2]{N}.
A morphism $\phi \colon X \rt Y$ can be written as a left fraction
\[
\xymatrix{
\
&U
\ar@{->}[dl]_{u}
\ar@{->}[dr]^{f}
 \\
X
\ar@{->}[rr]_{\phi = fu^{-1}}
&\
&Y
}
\]
with $u \in \Mor_\Sc$;
or as a right fraction
\[
\xymatrix{
\
&V
\ar@{<-}[dl]_{g}
\ar@{<-}[dr]^{v}
 \\
X
\ar@{->}[rr]_{\phi = v^{-1}g}
&\
&Y
}
\]
with $v \in \Mor_\Sc$.
\section{Support Varieties for some Verdier quotients}
In this section we show that in the Verdier quotients that we consider objects  have a well-defined notion of support variety.
Throughout our assumptions will be as in \ref{hypothesis}. We will also consider support varieties of modules over complete intersections as defined in \ref{supp}.

\s Let $X$ be an algebraic set in $\Pb^{c-1}$.
 Set
   \[
   \Sc_X = \{ M \mid M \ \text{is MCM and  } \  \Vc^*(M) \subseteq X \}
   \]
\begin{lemma}
  \label{defX-thick}
  $\Sc_X $ is a thick subcategory of $\CMS(A)$.
\end{lemma}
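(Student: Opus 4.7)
The plan is to verify the four defining properties of a thick subcategory of $\CMS(A)$: (a) $\Sc_X$ contains the zero object, (b) $\Sc_X$ is closed under the shift $\Om^{-1}$, (c) $\Sc_X$ is closed under the formation of cones (i.e.\ if two out of three vertices of an exact triangle lie in $\Sc_X$, so does the third), and (d) $\Sc_X$ is closed under direct summands. The main tool is that all these conditions can be translated into statements about $\Vc^*(-)$ via the results of the preliminaries (in particular \ref{ses} and the identification of varieties via $\Ext^*(-,k)$).

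For (a), any free $A$-module $F$ has $\Ext^{>0}_A(F,k) = 0$, so $\Vc^*(F) = \emptyset \subseteq X$, and so the zero object of $\CMS(A)$ lies in $\Sc_X$. For (b), the exact sequence $0 \to M \to F \to \Om^{-1}M \to 0$ with $F$ free yields $\Ext^{i}_A(\Om^{-1}M, k) \cong \Ext^{i-1}_A(M,k)$ for $i$ large, so these modules have the same annihilator in $T = k[t_1,\ldots,t_c]$ up to the irrelevant ideal, and hence $\Vc^*(\Om^{-1}M) = \Vc^*(M)$. This also implies that $\Vc^*$ descends to an invariant on $\CMS(A)$, since objects isomorphic in $\CMS(A)$ differ only by free summands, which contribute nothing to $\Ext^*(-,k)$ in positive degrees.

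For (c), the key step is to use the translation (given in the excerpt just after \ref{T-struc}) of a triangle $M \to N \to L \to \Om^{-1}(M)$ in $\CMS(A)$ into a short exact sequence $0 \to N \to E \to \Om^{-1}(M) \to 0$ in $\CMa(A)$ with $E \cong L$ in $\CMS(A)$. Applying \ref{ses} to this short exact sequence (with second argument $k$) gives
\[
\Vc^*(E) = \Vc^*(E,k) \subseteq \Vc^*(N,k) \cup \Vc^*(\Om^{-1}M,k) = \Vc^*(N) \cup \Vc^*(M).
\]
Combined with the invariance noted above, $\Vc^*(L) = \Vc^*(E) \subseteq X$ whenever $M, N \in \Sc_X$. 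By symmetry (rotating the triangle and using (b) to cancel shifts), the analogous statements hold for the other two pairs of vertices, establishing closure under cones.

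Finally (d) is immediate: if $M \oplus N \in \Sc_X$ then $\Ext^{*}_A(M,k)$ is a direct summand of $\Ext^{*}_A(M \oplus N, k)$, so $\ann_T \Ext^*_A(M \oplus N, k) \subseteq \ann_T \Ext^*_A(M,k)$, and taking varieties reverses the inclusion, giving $\Vc^*(M) \subseteq \Vc^*(M \oplus N) \subseteq X$, so $M \in \Sc_X$. No step here is genuinely hard; the only point that requires care is the passage from the triangle in $\CMS(A)$ to an actual short exact sequence in $\CMa(A)$ in step (c), which is exactly what the cited construction is designed to handle.
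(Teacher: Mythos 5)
Your proof is correct and follows essentially the same route as the paper: convert the triangle into a short exact sequence in $\CMa(A)$, apply \ref{ses} with $k$ in the second argument, use $\Vc^*(\Om^{-1}M)=\Vc^*(M)$ for closure under the shift, and use the decomposition of $\Vc^*(M\oplus N)$ for closure under summands. The extra details you supply (the $\Ext^*(-,k)$ computations behind shift-invariance and the summand step) are just spelled-out versions of facts the paper invokes directly.
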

\begin{proof}
Clearly $\Sc_X$ is closed under isomorphisms.
  If $M \in \Sc_X $ then as $\Vc^*(\Om^{-1}(M)) = \Vc^*(M)$ we get $\Om^{-1}(M) \in \Sc_X$.
  Let
  \[
  M \rt N \rt L \rt \Om^{-1}(M)
  \]
  be an exact triangle in $\CMS(A)$ with $M, N \in \Sc_X$. We then have a short exact sequence
  \[
  0 \rt N \rt L' \rt \Om^{-1}(M) \rt 0
  \]
  with $L' \cong L$ in $\CMS(A)$. By \ref{ses} we get $\Vc^*(L') \subseteq X$. Note $\Vc^*(L) = \Vc^*(L')$.
  Thus $\Sc_X$ is a triangulated subcategory of $\CMS(A)$. Let $M \oplus N \in \Sc_X$.
  As
  \[
  \Vc^*(M\oplus N) = \Vc^*(M)\cup \Vc^*(N)
  \]
  we get $M, N \in \Sc_X$. Thus $\Sc_X$ is a thick subcategory of $\CMS(A)$.
\end{proof}

\s Let $\Tc_X = \CMS(A)/\Sc_X$ denote the Verdier quotient of $\CMS(A)$ by $\Sc_X$.

\begin{lemma}\label{mor-X}
(with hypotheses as in \ref{defX-thick}) Let $f \colon M \rt N$ be in $\Mor_{\Sc_X}$. Then
\[
\Vc^*(M)\setminus X = \Vc^*(N)\setminus X.
\]
\end{lemma}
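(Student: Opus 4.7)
The plan is to convert the hypothesis that $f$ has cone in $\Sc_X$ into a module-theoretic short exact sequence of MCM modules, and then apply the triangle inequality for support varieties recorded in \ref{ses} with test module $k$. There is essentially no serious obstacle here; the only non-formal step is the passage from the triangulated cone to an honest short exact sequence, which is provided for free by the construction in \ref{T-struc}.

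First I would use \ref{T-struc} to fit $f$ into a basic exact triangle
\[
M \xrightarrow{f} N \to C(f) \to \Om^{-1}(M),
\]
and then invoke the observation immediately after \ref{T-struc} to replace it by a genuine short exact sequence of MCM modules
\[
0 \to N \to L \to \Om^{-1}(M) \to 0
\]
with $L \cong C(f)$ in $\CMS(A)$. Since $\Vc^*(-)$ depends only on the stable isomorphism class (free summands contribute trivially to $\Ext^*(-, k)$), this gives $\Vc^*(L) = \Vc^*(C(f)) \subseteq X$, the last inclusion being the hypothesis $f \in \Mor_{\Sc_X}$.

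Next I would apply \ref{ses} to this short exact sequence with the second argument equal to $k$. Because $\Vc^*(M') = \Vc^*(M', k)$ by the definition recalled in \ref{supp}, the two relevant instances of \ref{ses} read
\[
\Vc^*(N) \subseteq \Vc^*(L) \cup \Vc^*(\Om^{-1}(M)) = \Vc^*(L) \cup \Vc^*(M)
\]
and
\[
\Vc^*(M) = \Vc^*(\Om^{-1}(M)) \subseteq \Vc^*(N) \cup \Vc^*(L).
\]
Intersecting both inclusions with $\Pb^{c-1} \setminus X$ and using $\Vc^*(L) \subseteq X$ collapses the $\Vc^*(L)$ term and yields $\Vc^*(M) \setminus X = \Vc^*(N) \setminus X$, as desired.
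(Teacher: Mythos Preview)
Your proof is correct, but it takes a genuinely different route from the paper's.

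The paper argues pointwise: it fixes $a \in \Pb^{c-1} \setminus X$, picks an indicator module $U \in \Ic_a$, and applies $\sHom_A(U,-)$ and $\sHom_A(\Om U,-)$ to the triangle $\Om(L) \to M \to N \to L$ with $L \in \Sc_X$. Since $\Vc^*(L) \subseteq X$ forces $\sHom_A(U,L) = \sHom_A(U,\Om L) = 0$, the long exact sequence gives $\sHom_A(U,M) \cong \sHom_A(U,N)$ (and similarly with $\Om U$), and then Lemma~\ref{ind-lem} converts this into $a \in \Vc^*(M) \iff a \in \Vc^*(N)$.

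You instead pass once and for all to a genuine short exact sequence via \ref{T-struc} and invoke the global triangle inequality \ref{ses} with second argument $k$, then subtract $X$. This is cleaner and avoids the indicator machinery entirely. On the other hand, the paper's pointwise method is the template that gets reused verbatim in the proof of Lemma~\ref{mor-cmi} for the quotients $\Tc_i$, where one again probes points lying in a top-dimensional component but outside $\Vc^*(L)$; so the extra setup pays off later. Your approach would also adapt to that setting (an irreducible component of $\Vc^*(M)$ contained in $\Vc^*(N) \cup \Vc^*(L)$ with $\dim \Vc^*(L)$ too small must lie in $\Vc^*(N)$), but the paper's choice makes the two lemmas visibly parallel.
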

\begin{proof}
  We have a triangle
  \[
  \Om(L) \rt M \rt N \rt L
  \]
  with $L \in \Sc_X$. Let $a \in \Pb^{c-1} \setminus X$ and let $U \in \Ic_a$. Then notice $\sHom_A(U, L) = \sHom_A(U, \Om(L)) = 0$.
  So we have an isomorphism:
  \[
  \sHom_A(U, M) \cong \sHom_A(U, N).
  \]
  Similarly we have
  an isomorphism:
  \[
  \sHom_A(\Om{U}, M) \cong \sHom_A(\Om{U}, N).
  \]
  The result follows from \ref{ind-lem}.
\end{proof}
As a consequence we have:
\begin{corollary}
  \label{def-supp-X} (with hypotheses as in \ref{defX-thick}) Let $\phi \colon M \rt N$ be an isomorphism in $\Tc_X$. Then
   \[
\Vc^*(M)\setminus X = \Vc^*(N)\setminus X.
\]
\end{corollary}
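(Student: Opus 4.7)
The plan is to reduce the corollary to Lemma \ref{mor-X}, which already establishes the conclusion for any single morphism lying in $\Mor_{\Sc_X}$. Since every morphism in the Verdier quotient $\Tc_X = \CMS(A)/\Sc_X$ can be written as a left fraction, I would begin by representing
\[
\phi = f u^{-1}, \qquad \xymatrix{ & U \ar[dl]_{u} \ar[dr]^{f} & \\ M \ar[rr]_{\phi} & & N }
\]
where $u \colon U \rt M$ lies in $\Mor_{\Sc_X}$ (so that $u$ is invertible in $\Tc_X$) and $f \colon U \rt N$ is an honest morphism in $\CMa(A)$.

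The key intermediate step is to show that $f$ itself must belong to $\Mor_{\Sc_X}$. Since $u$ becomes an isomorphism in $\Tc_X$ by construction and $\phi$ is assumed to be an isomorphism in $\Tc_X$, the composite $f = \phi \circ u$ in $\Tc_X$ is also an isomorphism in $\Tc_X$. Now I would invoke the standard characterization of isomorphisms in a Verdier quotient (see \cite[Chapter 2]{N}): a morphism in $\CMS(A)$ descends to an isomorphism in the quotient by a thick subcategory $\Sc$ if and only if its cone lies in $\Sc$, which is exactly the condition $f \in \Mor_{\Sc_X}$.

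With both $u, f \in \Mor_{\Sc_X}$ in hand, I would apply Lemma \ref{mor-X} twice to obtain
\[
\Vc^*(M)\setminus X \;=\; \Vc^*(U)\setminus X \;=\; \Vc^*(N)\setminus X,
\]
which is the desired equality.

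The only substantive point is the characterization of isomorphisms in the Verdier quotient used above; the rest of the argument is formal manipulation of fractions together with Lemma \ref{mor-X}. I do not foresee any genuine obstacle, since the hard computational content (that morphisms in $\Mor_{\Sc_X}$ preserve $\Vc^* \setminus X$, proved via the indicator modules $\Ic_a$ for $a \in \Pb^{c-1} \setminus X$) has already been carried out in Lemma \ref{mor-X}.
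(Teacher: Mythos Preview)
Your proposal is correct and follows essentially the same route as the paper: write $\phi$ as a left fraction $fu^{-1}$ with $u\in\Mor_{\Sc_X}$, observe that $f$ must then also lie in $\Mor_{\Sc_X}$ since $\phi$ is an isomorphism, and apply Lemma~\ref{mor-X} to each leg to pass through $\Vc^*(U)\setminus X$. Your justification for $f\in\Mor_{\Sc_X}$ via the cone characterization is slightly more explicit than the paper's, but the argument is the same.
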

\begin{proof}
We write $\phi$ as a left fraction $fu^{-1}$
\[
\xymatrix{
\
&U
\ar@{->}[dl]_{u}
\ar@{->}[dr]^{f}
 \\
M
\ar@{->}[rr]_{\phi = fu^{-1}}
&\
&N
}
\]
with $u \in \Mor_{\Sc_X}$.
By Lemma \ref{mor-X}, we get
\[
\Vc^*(M)\setminus X = \Vc^*(U)\setminus X.
\]
As $\phi$ is an isomorphism we get that $f \in \Mor_{\Sc_X}$ also. Again by
Lemma \ref{mor-X}, we get
\[
\Vc^*(N)\setminus X = \Vc^*(U)\setminus X.
\]
The result follows.
\end{proof}

\begin{definition}
\label{supp-var-X}
Let $M \in \Tc_X$. Set
\[
\Vc_X(M) = \Vc^*(M)\setminus X.
\]
By Corollary \ref{def-supp-X} this is a well-defined invariant of $M$ in $\Tc_X$.
\end{definition}

\s\label{cmi} For $i = 1,\ldots, c-1$ let
 $$\CMS_{\leq i}(A) = \{ M \mid \text{ $M$ is MCM and } \ \cx_A M \leq i \}.$$
 Then by a proof similar to \ref{defX-thick} we get that  $\CMS_{\leq i}(A)$ is a thick subcategory of $\CMS(A)$. Set
 $$ \Tc_i = \CMS(A)/\CMS_{\leq i}(A). $$

 \s If $Y$ is a variety in $\Pb^{c-1}$, write
 $$ Y = \bigcup_{j = 1}^{m} Y_i \quad \text{where} \ Y_j \ \text{is irreducible}.$$
 Assume $\dim Y_j = \dim Y$ for $1\leq j \leq r$ and $\dim Y_j < \dim Y$ for $i > r$.
 Set
 \[
 \topv(Y) = \bigcup_{j = 1}^{r} Y_j.
 \]
 It is easy to prove that $\topv(Y)$ is an invariant of $Y$.
 \begin{lemma}\label{mor-cmi}
(with hypotheses as in \ref{cmi}) Let $f \colon M \rt N$ be in $\Mor_{\CMS_{\leq i}}$. Then
\[
\topv(\Vc^*(M)) = \topv(\Vc^*(N)).
\]
\end{lemma}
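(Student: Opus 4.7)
The plan is to imitate the proof of Lemma \ref{mor-X} using indicator modules, then compare irreducible components by dimension. Since $f \in \Mor_{\CMS_{\leq i}}$, there is an exact triangle $\Om(L) \rt M \rt N \rt L$ in $\CMS(A)$ with $\cx_A L \leq i$, hence $\dim \Vc^*(L) \leq i - 1$; note also $\Vc^*(\Om(L)) = \Vc^*(L)$.

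First I would show $\Vc^*(M) \setminus \Vc^*(L) = \Vc^*(N) \setminus \Vc^*(L)$. For any $a \in \Pb^{c-1} \setminus \Vc^*(L)$ choose $N_a \in \Ic_a$; since $a \notin \Vc^*(L) = \Vc^*(\Om(L))$, Lemma \ref{ind-lem} gives $\Ext^*_A(N_a, L) = 0 = \Ext^*_A(N_a, \Om(L))$. Applying $\Ext^*_A(N_a, -)$ to the triangle then yields $\Ext^*_A(N_a, M) \cong \Ext^*_A(N_a, N)$, and a second application of Lemma \ref{ind-lem} converts this to $a \in \Vc^*(M) \iff a \in \Vc^*(N)$.

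Next I would promote this pointwise statement to the level of top-dimensional components. Let $Y$ be an irreducible component of $\Vc^*(M)$ with $\dim Y \geq i$. Because $\dim \Vc^*(L) \leq i - 1 < \dim Y$ and $Y$ is irreducible, $Y \not\subseteq \Vc^*(L)$, so $Y \setminus \Vc^*(L)$ is open and dense in $Y$ and, by the previous step, contained in $\Vc^*(N)$; taking closure gives $Y \subseteq \Vc^*(N)$, whence $Y$ sits inside some irreducible component $Z$ of $\Vc^*(N)$. The symmetric argument produces an irreducible component $Y'$ of $\Vc^*(M)$ with $Z \subseteq Y'$, and maximality of irreducible components forces $Y = Z = Y'$. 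Hence every irreducible component of $\Vc^*(M)$ of dimension $\geq i$ is an irreducible component of $\Vc^*(N)$ of the same dimension, and vice versa; in particular, as soon as $\dim \Vc^*(M) \geq i$ or $\dim \Vc^*(N) \geq i$, the two dimensions coincide and $\topv(\Vc^*(M)) = \topv(\Vc^*(N))$.

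The main obstacle is handling the degenerate range where both $\dim \Vc^*(M) < i$ and $\dim \Vc^*(N) < i$, in which the component-by-component argument is vacuous and the top parts could in principle differ. This however is precisely the range in which $\Vc_i(M) = \Vc_i(N) = \emptyset$ by definition, so it is harmless for the intended Corollary \ref{def-supp-cmi}; an additional line verifying the dichotomy $\cx_A M > i \iff \cx_A N > i$ (which follows from the inclusions $\Vc^*(M) \subseteq \Vc^*(N) \cup \Vc^*(L)$ and the symmetric one from \ref{ses}, combined with $\dim \Vc^*(L) \leq i - 1$) completes the well-definedness in $\Tc_i$.
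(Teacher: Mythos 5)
Your proposal is correct and follows essentially the same route as the paper: the triangle $\Om(L) \rt M \rt N \rt L$ with $\dim \Vc^*(L) \leq i-1$, indicator modules via Lemma \ref{ind-lem} to show that points of $\Vc^*(M)$ off $\Vc^*(L)$ lie in $\Vc^*(N)$, and a density/irreducibility argument to transfer the top-dimensional components. Your explicit handling of the degenerate range $\cx_A M \leq i$ and $\cx_A N \leq i$ is in fact a point the paper glosses over (its proof silently assumes $\dim Y_j \geq i$, and the literal equality of $\topv$'s can fail there, e.g.\ for the map $M \rt 0$ with $\cx_A M = i \geq 1$, whose cone $\Om^{-1}(M)$ lies in $\CMS_{\leq i}(A)$), and your complexity dichotomy via \ref{ses} is exactly what is needed to salvage the well-definedness of $\Vc_i$ in Corollary \ref{def-supp-cmi}.
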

\begin{proof}
We write $\Vc^*(M)$ and $\Vc^*(N)$ as a union of irreducible subvarieties;
 $$ \Vc^*(M) = \bigcup_{j = 1}^{m} Y_j \quad \text{and} \quad \ \Vc^*(N) = \bigcup_{j = 1}^{m} Z_j  $$
 Assume $\dim Y_j = \dim \Vc^*(M) $ for $1\leq j \leq r$ and $\dim Y_j <  \Vc^*(M) \dim $ for $j > r$. Also assume
 $\dim Z_j = \dim \Vc^*(N) $ for $1\leq j \leq s$ and $\dim Z_j < \dim  \Vc^*(N)$ for $j > s$.

We have a triangle
  \[
  \Om(L) \rt M \rt N \rt L
  \]
  with $L \in \CMS_{\leq i}$.
Fix $j$ with $1\leq j \leq  r$. As $\dim \Vc^*(L) \leq i-1$ and $\dim Y_j  \geq i$ we get that $Y_j \setminus \Vc^*(L) \neq \emptyset$.
Let $a \in  Y_j \setminus \Vc^*(L)$ and let $U \in \Ic_a$. By an argument similar to proof of Lemma \ref{mor-X} we get that
$\Ext^*(U, N)\neq 0$. So $a \in \Vc^*(N)$. Thus
\[
Y_j \setminus \Vc^*(L) \subseteq \Vc^*(N)
\]
It follows that $Y_j  \subseteq \Vc^*(N)$. An elementary argument yields that $Y_j \subseteq \topv(\Vc^*(N))$.
Thus $ \topv(\Vc^*(M))\subseteq \topv(\Vc^*(N))$. An analogous argument yields  $ \topv(\Vc^*(N))\subseteq \topv(\Vc^*(M))$.
\end{proof}
As a consequence we have:
\begin{corollary}
  \label{def-supp-cmi} (with hypotheses as in \ref{cmi}) Let $\phi \colon M \rt N$ be an isomorphism in $\Tc_i$. Then
   \[
\topv(\Vc^*(M)) \cong \topv(\Vc^*(N)).
\]
\end{corollary}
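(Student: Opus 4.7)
The plan is to mimic exactly the proof of Corollary \ref{def-supp-X}, with Lemma \ref{mor-cmi} playing the role that Lemma \ref{mor-X} did in that earlier argument. Since $\phi \colon M \to N$ is an isomorphism in the Verdier quotient $\Tc_i = \CMS(A)/\CMS_{\leq i}(A)$, I would first represent it by a left fraction
\[
\xymatrix{
\ & U \ar@{->}[dl]_{u} \ar@{->}[dr]^{f} \\
M \ar@{->}[rr]_{\phi = fu^{-1}} & \ & N
}
\]
with $u \in \Mor_{\CMS_{\leq i}}$, i.e.\ the cone of $u$ lies in $\CMS_{\leq i}(A)$.

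The next step is to observe that because $\phi$ is an isomorphism in $\Tc_i$, the morphism $f$ must also belong to $\Mor_{\CMS_{\leq i}}$. This is a standard fact about Verdier quotients: if $u$ becomes invertible and $\phi = fu^{-1}$ is invertible, then $f = \phi u$ becomes invertible, and a morphism in $\CMS(A)$ becomes an isomorphism in $\Tc_i$ precisely when its cone lies in the thick subcategory $\CMS_{\leq i}(A)$.

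Having placed $u$ and $f$ simultaneously in $\Mor_{\CMS_{\leq i}}$, I apply Lemma \ref{mor-cmi} to each in turn to obtain
\[
\topv(\Vc^*(M)) = \topv(\Vc^*(U)) \quad \text{and} \quad \topv(\Vc^*(N)) = \topv(\Vc^*(U)),
\]
and the desired equality $\topv(\Vc^*(M)) = \topv(\Vc^*(N))$ follows immediately by transitivity.

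I do not anticipate any real obstacle: the content is entirely in Lemma \ref{mor-cmi}, which has already been established, and the left-fraction calculus and the characterization of isomorphisms in the Verdier quotient are general facts about triangulated categories recorded in \cite[Chapter 2]{N}. The only minor point worth being explicit about is the verification that $f \in \Mor_{\CMS_{\leq i}}$, but this is the same two-out-of-three style argument used implicitly in the proof of Corollary \ref{def-supp-X}.
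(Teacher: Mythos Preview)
Your proposal is correct and is exactly the approach the paper takes: the paper simply states that the proof uses Lemma \ref{mor-cmi} and is analogous to the proof of Corollary \ref{def-supp-X}, which is precisely the left-fraction argument you outline. There is nothing to add.
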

The proof of the above Corollary uses \ref{mor-cmi} and is analogous to proof of Corollary \ref{def-supp-X}.

\section{Proof of Theorems \ref{sym} (partly) and \ref{GAR}}
In this section we give proof of part of Theorem \ref{sym}. As a consequence  we give a proof of Theorem \ref{GAR}. We restate the part of Theorem \ref{sym} we will prove:
\begin{theorem}
   \label{sym-main}
(with hypotheses as in \ref{hypothesis} and notation as in \ref{defnX}.) Let $X$ be any algebraic set in $\Pb^{c-1}(k)$ . Let $M, N$ be any two MCM $A$-modules.
Consider the following two statements:
\begin{enumerate}[\rm (1)]
  \item $\Hom_{\Tc_X}(M, \Om^{-n}(N)) = 0 \ \text{for all} \ n \gg 0$.
  \item $\Vc_X(M) \cap \Vc_X(N) = \emptyset$.
\end{enumerate}
Then $(1) \implies (2)$.
 \end{theorem}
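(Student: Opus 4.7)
The plan is to argue by contrapositive, using the graded $R$-module $\Ext^*_A(M,N)$ together with the support-variety description of $\Vc^*(M)\cap \Vc^*(N)$. The starting point is the standard characterisation of zero morphisms in a Verdier quotient by a thick subcategory (cf. \cite[Ch.~2]{N}): a morphism $f\colon M \to \Om^{-n}(N)$ in $\CMS(A)$ vanishes in $\Tc_X$ if and only if $f$ factors through some object of $\Sc_X$. For $n \geq 1$ we identify $\sHom_A(M,\Om^{-n}(N)) = \Ext^n_A(M,N)$, so hypothesis (1) becomes: for every $n \gg 0$ every $f \in \Ext^n_A(M,N)$ admits a factorisation $f = h\circ g$ with $g\colon M \to S$, $h\colon S \to \Om^{-n}(N)$ and $S \in \Sc_X$.

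Let $F = \Ext^*_A(M,N)$, a finitely generated graded $R$-module by Gulliksen, and let $F_X \subseteq F$ be the graded subset of morphisms that factor through $\Sc_X$. The first key step is to verify that $F_X$ is an $R$-submodule. Closure under addition uses that $\Sc_X$ is closed under finite direct sums, so $f_1 + f_2$ factors through $S_1\oplus S_2 \in \Sc_X$. For the $t_i$-action, I would realise $t_i$ on $\Ext^*_A(M,N)$ as post-composition with a representative $\tau_i\colon N \to \Om^{-2}(N)$ of the Eisenbud class in $\Ext^2_A(N,N)$; then a factorisation $f = h\circ g$ through $S$ yields $t_i\cdot f = (\tau_i\circ h)\circ g$, still factoring through $S$.

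Set $E = F\otimes_A k$ and $E_X = \image(F_X\otimes_A k \to E)$. These are finitely generated graded modules over $T = k[t_1,\ldots,t_c]$, and by \cite[5.6]{avr-b} we have $\Vc^*(E) = \Vc^*(M)\cap \Vc^*(N)$. Since $E$ is Noetherian over $T$, choose finitely many generators $\bar f_1,\ldots,\bar f_r$ of $E_X$ with factorisations $f_i = h_i\circ g_i$ through $S_i \in \Sc_X$. The $T$-submodule generated by each $\bar f_i$ lies in the image of the $T$-linear map $\Ext^*_A(S_i,N)\otimes k \to \Ext^*_A(M,N)\otimes k$ induced by pre-composition with $g_i$, and is therefore supported inside $\Vc^*(S_i,N) = \Vc^*(S_i)\cap \Vc^*(N) \subseteq X$; summing gives $\Vc^*(E_X) \subseteq X$. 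Under hypothesis (1), the opening paragraph forces $F_X^n = \Ext^n_A(M,N)$ for all $n \gg 0$, so by right exactness of $-\otimes_A k$ we get $E/E_X = (F/F_X)\otimes_A k$ with $(F/F_X)^n = 0$ for $n \gg 0$; hence $E/E_X$ is a finitely generated graded $T$-module of bounded degree, therefore of finite length, therefore with empty support in $\Pb^{c-1}$. The short exact sequence $0 \to E_X \to E \to E/E_X \to 0$ now gives $\Vc^*(E) = \Vc^*(E_X) \cup \Vc^*(E/E_X) \subseteq X$, which is precisely (2).

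The main technical point is the $R$-submodule claim for $F_X$: one must pin down a realisation of the Eisenbud $t_i$-action on $\Ext^*_A(M,N)$ which visibly commutes with pre- and post-composition by morphisms in the stable category, so that factorisations through $\Sc_X$ are preserved. Once this naturality is in place the rest is a routine support-variety bookkeeping, the only minor caveat being that $-\otimes_A k$ is merely right exact, which forces $E_X$ to be defined as an image rather than a tensor product; right exactness nevertheless suffices to identify $E/E_X$ with $(F/F_X)\otimes_A k$, as used above.
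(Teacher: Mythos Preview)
Your argument is correct and follows essentially the same strategy as the paper's proof: both use the characterisation of zero morphisms in a Verdier quotient as those factoring through the thick subcategory, then transport this factorisation information into a statement about the $T$-module $\Ext^*_A(M,N)\otimes k$ via naturality of the Eisenbud operators, and conclude via the Avramov--Buchweitz formula $\Vc^*(M,N)=\Vc^*(M)\cap\Vc^*(N)$.

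The organisation differs slightly. The paper works directly: it splits $\Ext^*_A(M,N)$ into its even and odd parts, chooses $m$ large enough that each part is generated (as $R$-module) in a single degree $\geq n_0$, factors each generator $u_j$ through some $W_j\in\Sc_X$ as $u_j=w_j\circ v_j$, and then uses naturality in the \emph{second} variable to obtain $R$-linear surjections $\bigoplus_j\Ext^*_A(M,W_j)\twoheadrightarrow E^{ev}(M,N)_{\geq m}$ (and similarly for the odd part), from which the support bound follows. You instead introduce the graded subset $F_X$ of all elements factoring through $\Sc_X$, verify it is an $R$-submodule, and run a quotient argument; your support bound for $E_X$ uses naturality in the \emph{first} variable (pre-composition with $g_i$). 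Your packaging avoids the even/odd split, at the cost of having to establish that $F_X$ is $t_i$-stable; note that this last point is in fact immediate from first-variable naturality (if $f=g^*(h)$ with $h\in\Ext^n_A(S,N)$, then $t_if=g^*(t_ih)$ still factors through $S$), so the realisation of $t_i$ as post-composition with a class $\tau_i\in\Ext^2_A(N,N)$ that you flag as the main technical point is not actually needed.
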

 As a corollary we prove
 \begin{proof}[Proof of Theorem \ref{GAR}]
 By Theorem \ref{sym-main} we get $\Vc_X(M)  = \emptyset$. So $\Vc^*(M)\subseteq X$. This implies $M = 0$ in $\Tc_X$.
 \end{proof}
We now give
\begin{proof}[Proof of Theorem \ref{sym-main}]
Assume $\Hom_\Tc(M, \Om^{-n}(N)) = 0 \ \text{for all} \ n \geq n_0$. We know that $E(M,N) = \Ext^*_A(M, N)$ is finitely generated over ring of cohomology operators
$R = A[t_1,\ldots, t_c]$. Set
\[
E^{ev}(M, N) = \bigoplus_{n \geq 0}\Ext^{2n}(M, N) \quad \text{and} \quad E^{odd}(M, N) = \bigoplus_{n \geq 0}\Ext^{2n + 1}(M, N).
\]
Then $E(M, N) = E^{ev}(M, N)\oplus E^{odd}(M, N)$ as $R$-modules. We may assume that for some $m \geq n_0 $ the modules $E^{ev}(M, N)_{\geq m}$ and $E^{odd}(M, N)_{\geq m}$
are generated by $\Ext^{2m}_A(M,N)$ and $\Ext^{2m + 1}_A(M,N)$ respectively as $R$-modules.

Let $u_1,\ldots, u_r \in \Ext^{2m}_A(M,N)$ generate $E^{ev}(M, N)_{\geq m}$ as an $R$-module.
Note
\[
\Ext^{2m}_A(M,N)= \sHom_A(M, \Om^{-2m}(N)) \rt \Hom_{\Tc_X}(M, \Om^{-2m}(N)) = 0.
\]
So by \cite[2.1.26]{N} there exits $W_j$ with $\Vc^*(W_j)\subseteq X$ such that the map $u_j  \in \sHom_A(M, \Om^{-2m}(N))$ factors through $W_j$.
We have a commutative diagram
\[
\xymatrix{
\
&W_j
\ar@{<-}[dl]^{v_j}
\ar@{->}[dr]^{w_j}
 \\
M
\ar@{->}[rr]_{u_j}
&\
&\Om^{-2m}(N)
}
\]
So we have map $w_j^* \colon \Ext^*(M, W_j) \rt \Ext^*_{\geq 2m}(M, N)$ of $R$-modules. Note $u_j \in \image w_j^*$. Thus we have a surjective map
\[
\bigoplus_{j = 1}^{r} \Ext^*(M, W_j)  \xrightarrow{\sum w_i} E^{ev}(M, N)_{\geq m} \rt 0.
\]
Set $W = \oplus_j W_j$. Note $W \in \Sc_X$. Set $T = k[t_1,\ldots, t_c] = R\otimes k$. Tensoring with $k$ and taking annhilators (and their varieties) we get
\[
Var(\ann_T E^{ev}(M, N)_{\geq m}\otimes k) \subseteq X.
\]
Similarly we obtain
\[
Var(\ann_T E^{odd}(M, N)_{\geq m}\otimes k) \subseteq X.
\]
Notice
\[
\ann_T E^{odd}(M, N)_{\geq m}\otimes k) \cap  \ann_T E^{ev}(M, N)_{\geq m}\otimes k = \ann_T E(M, N)_{\geq 2m} = K (say).
\]
Clearly $Var(K) \subseteq X$. Set $\q$ to be  the unique graded maximal ideal
of
$T$. Then $\q^lK \subseteq \ann_T E(M, N)\otimes k$ for some $l \geq 1$. It follows that
\[
\Vc^*(M) \cap \Vc^*(N) \subseteq X.
\]
The result follows.
\end{proof}
\section{Proof of Theorem \ref{sym}}
In this section we give a proof of Theorem \ref{sym}. We first need:
\begin{lemma}\label{fart}[with hypotheses as in \ref{sym}]
Let $M \in \CMS(A)$ be essentially disjoint from $X$. Let $f \colon N \rt M$ and $g \colon M \rt L$ be in $\Mor_{\Sc_X}$. Then $N, L$ are essentially disjoint from $X$.
Further let $M = M_1 \oplus M_2$, $N = N_1 \oplus N_2$ and $L = L_1 \oplus L_2$ where $\Vc^*(M_1 \oplus N_1 \oplus L_1) \subseteq X$ and $\Vc^*(M_2 \oplus N_2 \oplus L_2) \subseteq \Pb^{c-1} \setminus X$. Consider the natural maps $i_M \colon M_2 \rt M$, $\pi_M \colon M \rt M_2$, $i_N \colon N_2 \rt N$ and $\pi_L \colon L \rt L_2$.
Then $\pi_M \circ f \circ i_N \colon N_2 \rt M_2$ and $\pi_L \circ g \circ i_M \colon M_2 \rt L_2$ are isomorphisms.
\end{lemma}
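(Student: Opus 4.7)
My plan is: form $\psi := \pi_M \circ f \colon N \rt M_2$. The cone of $f$ lies in $\Sc_X$ by hypothesis, and the cone of $\pi_M$ is $\Om^{-1}(M_1) \in \Sc_X$ (as $\Vc^*(M_1) \subseteq X$). The octahedral axiom then places the cone $D$ of $\psi$ in a triangle with these two objects of $\Sc_X$, forcing $D \in \Sc_X$. This produces a distinguished triangle
\[
\Om(D) \rt N \xrightarrow{\psi} M_2 \xrightarrow{h} D
\]
in $\CMS(A)$; the key step will be to show this triangle splits.

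For the splitting, I need the connecting map $h \in \sHom_A(M_2, D)$ to vanish. Since $\Vc^*(M_2) \cap \Vc^*(D) \subseteq (\Pb^{c-1} \setminus X) \cap X = \emptyset$, I plan to invoke the principle that over the complete intersection $A$, for MCM modules $P, Q$ with $\Vc^*(P) \cap \Vc^*(Q) = \emptyset$ the full Tate cohomology of the pair (including the degree-zero stable Hom) vanishes; this gives $\sHom_A(M_2, D) = 0$, hence $h = 0$ and the triangle splits. Consequently $N \cong M_2 \oplus \Om(D)$ in $\CMS(A)$.

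This isomorphism exhibits $N$ as essentially disjoint from $X$: $\Om(D) \in \Sc_X$ and $\Vc^*(M_2) \cap X = \emptyset$. Krull-Schmidt (valid since $A$ is complete) makes this decomposition essentially unique, so for any $N = N_1 \oplus N_2$ satisfying the support conditions of the statement, $N_1$ is stably isomorphic to $\Om(D)$ and $N_2$ to $M_2$. The section $s \colon M_2 \rt N$ produced by the splitting satisfies $\pi_M \circ f \circ s = \psi \circ s = \text{id}_{M_2}$, and $i_N \colon N_2 \rt N$ agrees with $s$ up to an isomorphism $N_2 \cong M_2$; therefore $\pi_M \circ f \circ i_N$ is an isomorphism. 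For $g \colon M \rt L$ I would run the completely dual argument: form $g \circ i_M \colon M_2 \rt L$, check by the octahedral axiom that its cone lies in $\Sc_X$, and split the resulting triangle via the same vanishing principle, yielding $L \cong M_2 \oplus \Om(D')$ in $\CMS(A)$ and the isomorphism $\pi_L \circ g \circ i_M$.

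The main obstacle will be the degree-zero stable-Hom vanishing $\sHom_A(M_2, D) = 0$ under disjoint support varieties. Positive-degree Ext vanishing is immediate from Avramov-Buchweitz \cite{avr-b}, but the degree-zero case --- equivalently, the splitting of the key triangle --- requires the stronger ``support detects morphisms'' principle for the stable module category of a complete intersection, which is the technically delicate ingredient on which the whole argument rests.
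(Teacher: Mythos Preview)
Your approach is correct and rests on the same key fact as the paper's --- that $\sHom_A(P,Q)=0$ whenever $P,Q$ are MCM with $\Vc^*(P)\cap\Vc^*(Q)=\emptyset$ --- but the organization differs in two respects. First, the paper obtains the decomposition $N=N_1\oplus N_2$ by invoking Bergh's splitting theorem \cite[3.1]{B}, after observing via Lemma~\ref{mor-X} that $\Vc^*(N)$ is the disjoint union $(\Vc^*(N)\cap X)\sqcup\Vc^*(M_2)$; you instead derive the splitting directly from the triangle $\Om(D)\rt N\rt M_2\rt D$, which is a nice self-contained alternative that avoids the external citation. Second, and more importantly, the paper bypasses your Krull--Schmidt detour entirely: it considers the composite $h=\pi_M\circ f\circ i_N$ from the outset, notes that its cone $W$ lies in $\Sc_X$, and then uses the vanishing principle \emph{twice} to kill both connecting maps $W\rt N_2$ and $M_2\rt\Om^{-1}(W)$, forcing $W=0$ directly. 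This is cleaner than your route, where the passage ``$i_N$ agrees with $s$ up to an isomorphism $N_2\cong M_2$'' is not fully justified --- Krull--Schmidt matches summands but does not by itself align the inclusions. You can rescue that step (e.g.\ observe that the cone of $\psi\circ i_N$ lies in $\Sc_X$ by composition and also has support in $\Vc^*(N_2)\cup\Vc^*(M_2)\subseteq\Pb^{c-1}\setminus X$, hence is zero), but at that point you have essentially reproduced the paper's direct argument.
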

\begin{proof}
Note that $i_M, \pi_M \in \Mor_{\Sc_X}$. So $\pi_M \circ f \colon N \rt M_2$ and $g\circ i_M \colon M_2 \rt L$ are in $\Mor_{\Sc_X}$.
By \ref{mor-X} we get that
\[
\Vc^*(N) \setminus X = \Vc^*(M_2) \setminus X = \Vc^*(M_2).
\]
So we have a disjoint union
$$\Vc^*(N) = (\Vc^*(N)\cap X) \sqcup \Vc^*(M_2). $$
By \cite[3.1]{B} we get that $N = N_1 \oplus N_2$ where $\Vc^*(N_1) = \Vc^*(N)\cap X$ and $\Vc^*(N_2) =  \Vc^*(M_2)$. A similar assertion holds for $L$.

As $i_N \colon N_2 \rt N $ is in $\Mor_{\Sc_X}$ we get that  $h = \pi_M \circ f \circ i_N \colon N_2 \rt M_2$ is in $\Mor_{\Sc_X}$. So we have a triangle
\[
W \xrightarrow{\phi} N_2 \xrightarrow{h} M \xrightarrow{\phi} \Om^{-1}(W);
\]
where $W \in \Sc_X$.  As
$$\Vc^*(M_2)\cap X = \Vc^*(N_2)\cap X = \emptyset;$$
we get $\phi = 0$ and $\psi = 0$. As $\psi = 0$ we get by \cite[1.4]{Happel} that $\phi$ is a section. But $\phi = 0$. So $W= 0$. Thus $h$ is an isomorphism.
An analogous proof shows that $\pi_L \circ g \circ i_M $ is an isomorphism.
\end{proof}
We also need:
\begin{proposition}
\label{khoya}[with hypotheses as in \ref{sym}]
Let $M \in \CMS(A)$ be such that \\ $\Vc^*(M) \subseteq \Pb^{c-1} \setminus X$. Let $N, L \in \CMS(A)$ be any. Then the natural maps
\[
\Psi \colon \sHom_A(N, M) \rt \Hom_{\Tc_X}(N, M) \quad \text{and} \quad \Phi \colon \sHom_A( M, L) \rt \Hom_{\Tc_X}(M, L);
\]
are isomorphisms.
\end{proposition}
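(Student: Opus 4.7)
The plan is to leverage Lemma \ref{fart} as the principal tool. The starting observation is that $M$ is itself essentially disjoint from $X$ via the trivial decomposition $M = 0 \oplus M$ (with $M_2 = M$), so Lemma \ref{fart} applies to every morphism in $\Mor_{\Sc_X}$ that has $M$ as source or target. Such a morphism, although not a stable isomorphism in general, restricts to a stable isomorphism onto (or from) the non-$X$-supported summand of the other module, and this restriction will play the role of an inverse inside $\Tc_X$.

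For $\Psi$ I would argue surjectivity by representing a given $\phi \colon N \to M$ in $\Tc_X$ as a right fraction $\phi = v^{-1}g$ with $g \colon N \to V$ in $\CMS(A)$ and $v \colon M \to V$ in $\Mor_{\Sc_X}$. Lemma \ref{fart} supplies a decomposition $V = V_1 \oplus V_2$ with $\Vc^*(V_1) \subseteq X$ and $\Vc^*(V_2) \subseteq \Pb^{c-1} \setminus X$, together with a stable isomorphism $v_2 := \pi_{V_2} \circ v \colon M \to V_2$. Since $V_1 \cong 0$ in $\Tc_X$, the composite $v_2^{-1} \circ \pi_{V_2} \colon V \to M$ is a left inverse of $v$ in $\Tc_X$, hence equals $v^{-1}$ (as $v$ is already iso in $\Tc_X$). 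Thus $\alpha := v_2^{-1} \circ \pi_{V_2} \circ g$ lies in $\sHom_A(N,M)$ and $\Psi(\alpha) = \phi$. For injectivity: if $\alpha \in \sHom_A(N,M)$ dies in $\Tc_X$, pick $v \colon M \to V$ in $\Mor_{\Sc_X}$ with $v \circ \alpha = 0$ in $\CMS(A)$; applying $\pi_{V_2}$ and using that $v_2$ is a stable iso forces $\alpha = 0$.

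The argument for $\Phi$ is the mirror image. I would represent $\psi \colon M \to L$ as a left fraction $\psi = fu^{-1}$ with $u \colon U \to M$ in $\Mor_{\Sc_X}$, extract from Lemma \ref{fart} the stable isomorphism $u_2 := u \circ i_{U_2} \colon U_2 \to M$ (so that $u^{-1} = i_{U_2} \circ u_2^{-1}$ in $\Tc_X$), and set $\beta := f \circ i_{U_2} \circ u_2^{-1} \in \sHom_A(M,L)$ for surjectivity. Injectivity follows by the dual reasoning: $\beta \circ u = 0$ in $\CMS(A)$ implies $\beta \circ u_2 = 0$, and $\beta = 0$ since $u_2$ is iso in $\CMS(A)$.

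I do not anticipate a substantive obstacle here; the arguments for $\Psi$ and $\Phi$ are symmetric and each reduces to one application of Lemma \ref{fart} plus bookkeeping. The only point that requires a moment's care is the identification $v^{-1} = v_2^{-1} \circ \pi_{V_2}$ (and its mirror $u^{-1} = i_{U_2} \circ u_2^{-1}$) inside the quotient, which rests on the fact that the $V_1$ (resp.\ $U_1$) summand vanishes in $\Tc_X$ so that $\pi_{V_2}$ (resp.\ $i_{U_2}$) becomes an isomorphism there; once this is noted, the whole proposition is essentially an exercise in computing fractions.
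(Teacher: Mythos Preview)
Your proposal is correct and, for surjectivity, essentially identical to the paper's argument: both represent a morphism in $\Tc_X$ as a fraction with $M$ on the appropriate end, invoke Lemma \ref{fart} to decompose the auxiliary object and extract a genuine stable isomorphism $U_2 \to M$ (resp.\ $M \to V_2$), and thereby replace the fraction by an honest morphism in $\CMS(A)$.

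The one point of divergence is injectivity. The paper argues that if $\Psi(f)=0$ then $f$ factors in $\CMS(A)$ through some $W\in\Sc_X$ (this is \cite[2.1.26]{N}), and then observes directly that $\sHom_A(W,M)=0$ because $\Vc^*(W)\cap\Vc^*(M)=\emptyset$; no second appeal to Lemma \ref{fart} is needed. Your route instead uses the calculus-of-fractions description of the kernel (there exists $v\in\Mor_{\Sc_X}$ with $v\circ\alpha=0$) and then applies Lemma \ref{fart} once more to produce the stable isomorphism $v_2$ that cancels on the left. Both are valid; the paper's version is marginally shorter and uses only support-variety vanishing, while yours keeps everything within the fraction calculus and Lemma \ref{fart}, which has the virtue of making the whole proof uniform.
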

\begin{proof}
Suppose $\Psi(f) = 0$. Then factors through some $W \in \Sc_X$. So we have a commutative diagram
\[
\xymatrix{
\
&W
\ar@{<-}[dl]^{v}
\ar@{->}[dr]^{w}
 \\
N
\ar@{->}[rr]_{f}
&\
&M
}
\]
As $\Vc^*(M) \subseteq \Pb^{c-1} \setminus X$ we get that $w = 0$. So $f = 0$. Thus $\Psi$ is injective. A similar argument shows that $\Phi$ is injective.

Let $\xi \in  \Hom_{\Tc_X}(M, L)$. We write $\xi$ as a left fraction:
\[
\xymatrix{
\
&U
\ar@{->}[dl]_{u}
\ar@{->}[dr]^{f}
 \\
M
\ar@{->}[rr]_{\xi = fu^{-1}}
&\
&L
}
\]
where $u \in \Mor_{\Sc_X}$.  By \ref{fart}; $U$ is essentially disjoint from $X$. Say $U = U_1 \oplus U_2$ with $\Vc^*(U_1) \subseteq  X$ and
$\Vc^*(U_2) \subseteq \Pb^{c-1} \setminus X$. Let $i_U \colon U_2 \rt U$ be the natural map. Then $i_U \in \Mor_{\Sc_X}$. Set $u^\prime = u \circ i_U$ and
$f^\prime = f \circ i_U$. Then notice $\xi = f^\prime (u^\prime)^{-1}$. So we can write $\xi$ as an equivalent left fraction:
\[
\xymatrix{
\
&U_2
\ar@{->}[dl]_{u^\prime}
\ar@{->}[dr]^{f^\prime}
 \\
M
\ar@{->}[rr]_{\xi = f^\prime (u^\prime)^{-1}  }
&\
&L
}
\]
By \ref{fart} we get that $u^\prime$ is an isomorphism in $\CMS(A)$. It follows that $\xi \in \image \Phi$. Thus $\Phi$ is surjective.
The argument to show $\Psi$ is surjective is similar. However to prove this it is convenient to express elements of $\Hom_{\Tc_X}(N, M)$ as right fractions.
\end{proof}

Note that we proved half of Theorem \ref{sym} in the previous section. Here we prove a stronger result which implies the rest of Theorem \ref{sym} that we wish to prove.

\begin{theorem}
   \label{sym-part2}
(with hypotheses as in \ref{hypothesis} and notation as in \ref{defnX}.) Let $X$ be any algebraic set in $\Pb^{c-1}(k)$ . Let $M, N$ be any two MCM $A$-modules.
Assume $M$ or $N$ is essentially disjoint from $X$.
If $\Vc_X(M) \cap \Vc_X(N) = \emptyset$ then $\Hom_{\Tc_X}(M, \Om^{-n}(N)) = 0 \ \text{for all} \ n \in \Z$.
 \end{theorem}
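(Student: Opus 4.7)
First, I would use the essential-disjointness hypothesis to pass from the Verdier quotient back to the ordinary stable category. Without loss of generality assume $N$ is essentially disjoint from $X$, and write $N = N_1 \oplus N_2$ with $\Vc^*(N_1) \subseteq X$ and $\Vc^*(N_2) \subseteq \Pb^{c-1} \setminus X$. Since the translation $\Om^{-n}$ preserves support varieties, $\Om^{-n}N_1 \in \Sc_X$ for every $n \in \Z$, so $\Om^{-n}N \cong \Om^{-n}N_2$ in $\Tc_X$. The support $\Vc^*(\Om^{-n}N_2) = \Vc^*(N_2)$ lies in $\Pb^{c-1} \setminus X$, so Proposition \ref{khoya} applies and yields
\[
\Hom_{\Tc_X}(M, \Om^{-n}N) \;\cong\; \sHom_A(M, \Om^{-n}N_2).
\]
The assertion therefore reduces to showing $\sHom_A(M, \Om^{-n}N_2) = 0$ for every $n \in \Z$.

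Next, I would extract the genuine disjointness $\Vc^*(M) \cap \Vc^*(N_2) = \emptyset$ in all of $\Pb^{c-1}$. From $\Vc^*(N_2) \subseteq \Vc^*(N) \setminus X = \Vc_X(N)$ and the hypothesis $\Vc_X(M) \cap \Vc_X(N) = \emptyset$ we obtain $\Vc^*(N_2) \cap (\Vc^*(M) \setminus X) = \emptyset$; combined with $\Vc^*(N_2) \cap X = \emptyset$ this gives the asserted emptiness. By the Avramov-Buchweitz pair formula \ref{supp} we get $\Vc^*(M, N_2) = \emptyset$, hence $\Ext^n_A(M, N_2) = 0$ for all $n \gg 0$; since $\sHom_A(M, \Om^{-n}N_2) = \Ext^n_A(M, N_2)$ for $n \geq 1$, this disposes of the high range. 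To get vanishing in every $n \in \Z$, I would invoke the Tate-cohomological strengthening of Avramov-Buchweitz for complete intersections: the $\Z$-graded group $\bigoplus_{n \in \Z} \sHom_A(M, \Om^{-n}N_2)$ is a graded module on which the Eisenbud operators $t_1, \ldots, t_c$ act, and its total support variety coincides with $\Vc^*(M) \cap \Vc^*(N_2) = \emptyset$, which forces the entire graded module to vanish. An alternative route combines the Avramov-Buchweitz symmetry $\Ext^n_A(N_2, M) = 0$ for $n \gg 0$ with the $A$-dual $(-)^\vee = \Hom_A(-,A)$ on $\CMa(A)$ (which preserves support varieties) to convert positive-degree vanishing for the pair $(N_2^\vee, M^\vee)$ into negative-degree stable-Hom vanishing for $(M, N_2)$, and then closes the middle range using the cohomology-operator module structure together with Murthy's property in $\CMS(A)$.

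The main obstacle is precisely the upgrade from ``$\Ext^n = 0$ for $n \gg 0$'' (the classical Avramov-Buchweitz conclusion, and the content of the reverse direction already used in the proof of Theorem \ref{sym-main}) to the two-sided vanishing $\sHom_A(M, \Om^{-n}N_2) = 0$ for every integer $n$; this is the nontrivial input from the Tate theory over complete intersections. The essential-disjointness hypothesis is exactly what enables the first step: it lets us replace $N$ by its summand $N_2$ and then invoke Proposition \ref{khoya} to identify $\Hom_{\Tc_X}(M, \Om^{-n}N)$ with stable Hom. Without essential disjointness this reduction is unavailable and the $\Tc_X$-vanishing cannot be converted into a support-variety statement on $\Pb^{c-1}$, which is why the direction $(2) \Rightarrow (1)$ of Theorem \ref{sym} is not settled in general and is left as Conjecture \ref{sym-conj}.
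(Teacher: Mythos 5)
Your proof is correct and takes essentially the same route as the paper: split the essentially disjoint module, discard the summand supported inside $X$, use Proposition \ref{khoya} to identify $\Hom_{\Tc_X}(M,\Om^{-n}N)$ with $\sHom_A(M,\Om^{-n}N_2)$, and conclude from $\Vc^*(M)\cap\Vc^*(N_2)=\emptyset$ (the paper writes out the mirror case, decomposing $M$ and noting the $N$-case is similar). The step you make explicit --- upgrading the eventual vanishing of $\Ext_A^n(M,N_2)$ to vanishing of $\sHom_A(M,\Om^{-n}N_2)$ for \emph{every} $n\in\Z$ via the Avramov--Buchweitz stable (Tate) cohomology results for complete intersections --- is exactly the input the paper uses silently in its final equality, though note the Tate module is not finitely generated over the operator ring, so one should quote that theorem (or reduce to positive degrees by replacing $N_2$ with a syzygy) rather than argue via annihilators of the total graded module.
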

\begin{proof}
Fix $n \in \Z$.
First assume that $M$ is essentially disjoint from $X$. Then $M = M_1 \oplus M_2$ where $\Vc^*(M_1) \subseteq X$ and $\Vc^*(M_2) \subseteq \Pb^{c-1} \setminus X$.
It follows that $\Vc^*(M_2)\cap \Vc^*(N) = \emptyset$. It follows that
$\sHom_A(M_2, N) = 0$. Notice $M_1 = 0$ in $\Tc_X$. So we have,
\begin{align*}
  \Hom_{\Tc_X}(M, \Om^{-n}(N) ) &= \Hom_{\Tc_X}(M_2, \Om^{-n}(N)) \\
   &\cong \sHom_A( M_2 , \Om^{-n}(N)), \quad \text{by \ref{khoya}};  \\
   &= 0
\end{align*}

The case when $N$ is essentially disjoint from $X$ is similar.

\end{proof}

\section{proof of Theorem \ref{murthy}}
In this section we give a proof of Theorem \ref{murthy}. We first need the following result Theorem \cite[3.7]{P}:
\begin{lemma}
  \label{mine}(with hypotheses as in \ref{hypothesis})
  Let $M \in \CMa(A)$ with $\cx_A M \geq 2$. Then there exists for some $n \geq 0$ a short exact sequence
  \[
  0 \rt K \rt \Om^{n+2}(M) \rt \Om^n(M) \rt 0
  \]
  where $\cx_A K = \cx_A M -1$.
\end{lemma}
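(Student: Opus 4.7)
The plan is to use the Eisenbud cohomology operators $t_1,\ldots,t_c$ attached to the presentation $A = Q/(\mathbf{u})$. Each $t_i$ arises from a chain endomorphism of degree $-2$ on a lifted complex $\wt{\mathbb{F}}$, which after reduction to $A$ gives an honest $A$-module map $\Om^{n+2}(M) \to \Om^n(M)$ for every $n \geq 0$. A linear combination $y = \sum_{i=1}^{c} \lambda_i t_i$ with $[\lambda] \in \Pb^{c-1}$ therefore also yields an honest module map $y_n \colon \Om^{n+2}(M) \to \Om^n(M)$.

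The first step is to choose $[\lambda]$ generically. Since $\Ext^*_A(M,k)$ is a finitely generated graded $T = k[t_1,\ldots,t_c]$-module whose variety $\Vc^*(M) \subseteq \Pb^{c-1}$ has dimension $\cx_A M - 1 \geq 1$, prime avoidance in $T$ lets me pick a linear form $y$ that is a parameter on the top-dimensional part of $\Ext^*_A(M, k)$ modulo its finite-length socle. This ensures that the variety of $\Ext^*_A(M, k)/y\Ext^*_A(M, k)$ has dimension $\cx_A M - 2$ in $\Pb^{c-1}$, that is, complexity $\cx_A M - 1$.

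Next I would fix $n$ sufficiently large and arrange that $y_n$ is surjective at the module level. Stable surjectivity follows from the Ext-module argument above; to promote this to an honest surjection I would represent $y$ via a lift to a \emph{minimal} free resolution (so no split free factor obstructs surjectivity in the cokernel) and use Nakayama's lemma to reduce to surjectivity modulo $\m$, which can be read off from the generators of the graded piece of $\Ext^*_A(M,k)$ in degree $2m$ after a high enough shift. Setting $K := \ker y_n$ then produces the short exact sequence $0 \to K \to \Om^{n+2}(M) \to \Om^n(M) \to 0$.

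The final step is to verify $\cx_A K = \cx_A M - 1$. Applying $\Ext^*_A(-, k)$ yields a long exact sequence whose connecting homomorphism agrees, up to sign, with multiplication by $y$ on $\Ext^*_A(M, k)$. Hence in sufficiently high degree $\Ext^*_A(K, k)$ agrees with an appropriate shift of $\Ext^*_A(M, k)/y\Ext^*_A(M, k)$, whose variety has dimension $\cx_A M - 2$ by the choice of $y$; this yields $\cx_A K = \cx_A M - 1$. The main obstacle in this plan is upgrading the stable surjectivity of $y_n$ to genuine module-level surjectivity: this is the technical heart of the argument and typically requires a careful choice of the representing chain map together with the standard device of augmenting the source and target by free summands to absorb the obstruction.
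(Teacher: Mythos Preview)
The paper does not actually prove this lemma; it is simply quoted from \cite[3.7]{P}. Your outline is the standard route to such a statement and is essentially correct, and it is almost certainly the argument used in the cited reference.

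One comment: the step you single out as the ``main obstacle'' --- upgrading to genuine surjectivity of $y_n \colon \Om^{n+2}(M) \to \Om^n(M)$ --- is in fact routine and requires no augmentation by free summands. Work with a minimal free resolution $\mathbb{F}$ of $M$, so that $\Ext^n_A(M,k) = \Hom_k(F_n \otimes_A k, k)$. Your choice of $y$ makes multiplication by $y$ on $\Ext^*_A(M,k)$ injective in all sufficiently large degrees; dualizing and applying Nakayama's lemma shows that the chain map $t \colon F_{n+2} \to F_n$ representing $y$ is a surjection of $A$-modules for all $n \gg 0$. Now apply the snake lemma to
\[
\begin{CD}
0 @>>> \Om^{n+3}(M) @>>> F_{n+2} @>>> \Om^{n+2}(M) @>>> 0 \\
@. @VV{t}V @VV{t}V @VV{t}V @. \\
0 @>>> \Om^{n+1}(M) @>>> F_{n} @>>> \Om^{n}(M) @>>> 0
\end{CD}
\]
to conclude immediately that $t \colon \Om^{n+2}(M) \to \Om^{n}(M)$ is surjective. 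Setting $K$ to be its kernel and running the long exact sequence in $\Ext_A(-,k)$, whose connecting map is multiplication by $y$, gives $\cx_A K = \cx_A M - 1$ exactly as you describe.
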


We will also need the following:
\begin{definition}
Let $M \in \Tc_X$. Set
\[
\ecx M = \min \left\{ \cx_A L \mid L \cong M \ \text{in} \ \Tc_X \right \}.
\]
\end{definition}
The proof of Theorem \ref{murthy} is by induction.
It is convenient to prove a stronger form of the result in a special case:
\begin{theorem}
  \label{murthy-strong-empty}
  (with hypotheses as in \ref{hypothesis} and notation as in \ref{defnX}.)  Let $M, N$ be objects in $\Tc_X$.
  Assume $\Vc_X(M)\cap \Vc_X(N) = \emptyset$.
  Let $r = \ecx N $. Then
  \begin{align*}
    \Hom_{\Tc_X}(M, \Om^{-n} (N)) &= 0 \ \text{for } \ n = m, \cdots ,m+r-2   \\
    &\implies  \Hom_{\Tc_X}(M, \Om^{-n} (N) ) = 0 \ \text{for all }  \  n \geq m.
  \end{align*}
\end{theorem}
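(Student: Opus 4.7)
The plan is to induct on $r = \ecx N$. The base case $r = 0$ is immediate since $N = 0$ in $\Tc_X$. For $r = 1$, I pick $L \cong N$ in $\Tc_X$ with $\cx_A L = 1$; then $\Vc^*(L)$ is a finite set of closed points in $\Pb^{c-1}$, and the disjoint union $\Vc^*(L) = (\Vc^*(L) \cap X) \sqcup (\Vc^*(L) \setminus X)$ of closed subsets lifts, via \cite[3.1]{B}, to a module decomposition $L = L_1 \oplus L_2$ with $\Vc^*(L_1) \subseteq X$ and $\Vc^*(L_2) \cap X = \emptyset$. Thus $L$ is essentially disjoint from $X$, and Theorem \ref{sym-part2} applied to $(M, L)$ under the hypothesis $\Vc_X(M) \cap \Vc_X(L) = \Vc_X(M) \cap \Vc_X(N) = \emptyset$ forces $\Hom_{\Tc_X}(M, \Om^{-n}N) = 0$ for every $n \in \Z$, proving the conclusion unconditionally (the hypothesis being vacuous here).

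For the inductive step $r \geq 2$, I fix $L \cong N$ in $\Tc_X$ with $\cx_A L = r$ and apply Lemma \ref{mine} to obtain an integer $n_0 \geq 0$ and a short exact sequence
\[
0 \rt K \rt \Om^{n_0+2}L \rt \Om^{n_0}L \rt 0
\]
with $\cx_A K = r - 1$. From \ref{ses} one gets $\Vc^*(K) \subseteq \Vc^*(L)$, hence $\Vc_X(M) \cap \Vc_X(K) \subseteq \Vc_X(M) \cap \Vc_X(N) = \emptyset$, and $\ecx K \leq r - 1$. Setting $f(n) = \Hom_{\Tc_X}(M, \Om^{-n}L)$ and $g(n) = \Hom_{\Tc_X}(M, \Om^{-n}K)$, the induced triangle in $\Tc_X$ produces, for each $p \in \Z$, an exact piece
\[
f(p) \rt g(p + n_0 + 1) \rt f(p-1) \rt f(p+1) \rt g(p + n_0 + 2),
\]
from which one reads off two implications: (i) $f(p-1) = f(p) = 0 \Rightarrow g(p + n_0 + 1) = 0$, and (ii) $g(q) = g(q+1) = 0 \Rightarrow f(p-1) \cong f(p+1)$ for $q = p + n_0 + 1$. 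The $r-1$ hypothesized consecutive vanishings of $f$ furnish (via (i)) $r - 2$ consecutive vanishings of $g$; since $\ecx K \leq r-1$, the inductive hypothesis applied to $K$ upgrades these to $g(q) = 0$ for all $q$ sufficiently large. Feeding this back through (ii) establishes $2$-periodicity of $f$ on $n \geq m$, and combined with the hypothesized vanishings at $m$ and $m+1$ (the latter lying in the hypothesis range for $r \geq 3$), this forces $f(n) = 0$ for all $n \geq m$.

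The main obstacle is the boundary case $r = 2$, where the hypothesis controls only $f(m)$: step (i) then produces no vanishings of $g$ directly. One does not lose entirely, since $\cx_A K = 1$ and the $r = 1$ base case applied to $K$ already forces $g(q) = 0$ for every $q \in \Z$ unconditionally, so (ii) still delivers $f(p-1) \cong f(p+1)$ for all $p$; but this periodicity propagates $f(m) = 0$ along only one parity class. Bridging the two parities is the technical heart of the proof, and I expect it to proceed by exploiting the $T$-module structure on $\bigoplus_n \Hom_{\Tc_X}(M, \Om^{-n}L)$ induced by the Eisenbud operators $t_1, \ldots, t_c$, together with the inclusion $\Vc^*(M) \cap \Vc^*(L) \subseteq X$ that follows from $\Vc_X(M) \cap \Vc_X(N) = \emptyset$, to force the surviving parity class to vanish as well.
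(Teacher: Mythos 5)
Your strategy coincides with the paper's: induct on $r=\ecx N$, settle $r\le 1$ via the decomposition of \cite[3.1]{B} and Theorem \ref{sym-part2}, and for larger $r$ use the short exact sequence of Lemma \ref{mine} together with the long exact sequence it induces in $\Tc_X$. Your implications (i) and (ii) are exactly the two exactness statements the paper reads off from its triangle $\Om^2(N)\rt N\rt E\rt \Om(N)$ (the paper merely shifts and rotates first, so that $E=\Om^{-n_0-1}(K)$ plays the role of your $K$; this is cosmetic), and your count — $r-1$ vanishings of $f$ give $r-2$ consecutive vanishings of $g$, the induction hypothesis for $K$ upgrades these, and (ii) then gives $2$-periodicity which, together with vanishing at $m$ and $m+1$, finishes the case $r\ge 3$ — is precisely the paper's concluding argument. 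Your handling of $r=1$ is in fact more careful than the paper's, which simply asserts $\Vc^*(N)=\{a\}$ with $a\notin X$ rather than invoking the splitting off of the part of the variety inside $X$.

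The proposal is nevertheless incomplete: you leave $r=2$ open, and the Eisenbud-operator bridge you ``expect'' is never supplied. Concretely, for $r=2$ the hypothesis is the single vanishing $f(m)=0$; since $\cx_A K=1$, the base case gives $g(q)=0$ for all $q$, so by (ii) $f(n)\cong f(n+2)$ for all $n$, and the two parity classes decouple completely — the long exact sequence alone cannot touch the class of $m+1$. Be aware that the paper's written proof does not close this either: its step ``putting $j=m+r-2$ in $(\dagger)$ gives $\Hom_{\Tc_X}(M,\Om^{-m-r+1}(N))=0$'' uses the hypothesized vanishing in degree $m+r-3$, which lies in the window $\{m,\ldots,m+r-2\}$ only when $r\ge 3$; for $r=2$ it would require $\Hom_{\Tc_X}(M,\Om^{-(m-1)}(N))=0$, which is not assumed. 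So the obstacle you flag is genuine and is exactly where the paper is silent; moreover your proposed fix via the $T$-action on $\bigoplus_n\Hom_{\Tc_X}(M,\Om^{-n}(N))$ is problematic as stated, because Hom-sets in the Verdier quotient need not be finitely generated (as the paper itself warns), so the Noetherian annihilator arguments used in the proof of Theorem \ref{sym-main} — which are carried out on genuine $\Ext$-modules before passing to $\Tc_X$ — do not transfer directly. As it stands, your argument proves the statement only for $r\ne 2$.
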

\begin{remark}
  Our Conjecture \ref{sym-conj} asserts that in the case above \\ $\Hom_{\Tc_X}(M, \Om^{-n} (N) ) = 0 $ for $n \gg 0$ without any initial condition.
\end{remark}
We now give:
\begin{proof}
[Proof of Theorem \ref{murthy-strong-empty}]  we use induction on $r = \ecx_A N$ to prove the result. We may assume $r = \cx_A N$.
We first consider the case when $r = 0$. Then $N = 0$  and so we have nothing to prove.
Next we consider the case when $r = 1$. Then note that $\Vc^*(N) = \{ a \}$ for some $a \in \Pb^{c-1} \setminus X$. The result follows from \ref{sym-part2}.

Next we consider the case when $r = \cx_A N \geq 2$ and assume the results are proved for MCM modules $E$ with $\ecx_A E < r$.
By \ref{mine}   we have a short exact sequence,
\[
0 \rt K \rt \Om^{n+2}(N) \rt \Om^n(N) \rt 0;
\]
for some $n \geq 0$ with $\cx_A K = r - 1$. So we have a triangle in $\CMS(A)$
\[
K \rt  \Om^{n+2}(N) \rt \Om^n(N) \rt \Om^{-1}(K)
\]
Rotating it several times we get the following triangle in $\CMS(A)$
\[
\Om^2(N) \rt N \rt E \rt \Om(N)
\]
where $\cx E = r -1$.  Also note $\Vc^*(E) \subseteq \Vc^*(N)$. We takes its image in $\Tc_X$
\[
s \colon   \Om^2(N) \rt N \rt E \rt \Om(N).
\]
Note $\Vc_X(M) \cap \Vc_X(E) = \emptyset$. By applying $\Hom_{\Tc_X}(M, -)$ to $s$ we obtain long exact sequence:
\begin{align*}
  \Hom_{\Tc_X}(M, \Om^{-j} (N)) &\rt \Hom_{\Tc_X}(M, \Om^{-j} (E)) \rt \Hom_{\Tc_X}(M, \Om^{-j + 1} (N)) \rt \tag{$\dagger$} \\
  \Hom_{\Tc_X}(M, \Om^{-j -1} (N)) &\rt \Hom_{\Tc_X}(M, \Om^{-j -1} (E)) \rt \Hom_{\Tc_X}(M, \Om^{-j} (N)) \rt.
\end{align*}
It follows that
\[
\Hom_{\Tc_X}(M, \Om^{-j} (E)) = 0 \quad \text{for} \  j = m+1,\ldots, m+ r-2.
\]
As $\ecx_A E \leq \cx_A E = r -1$ we get by induction hypotheses that \\
$\Hom_{\Tc_X}(M, \Om^{-j} (E)) = 0 $ for all $j \geq m+1$.
Putting $j = m + r -2$ in ($\dagger$) we get $\Hom_{\Tc_X}(M, \Om^{-m-r+1}(N)) = 0$.
Iterating we obtain $\Hom_{\Tc_X}(M, \Om^{-j} (N)) = 0$ for all $j \geq m$.
\end{proof}
We now state and prove a result which implies Theorem \ref{murthy}.
\begin{theorem}
  \label{murthy-strong}
  (with hypotheses as in \ref{hypothesis} and notation as in \ref{defnX}.)  Let $M, N$ be objects in $\Tc_X$.
  Let $r = \ecx N $. Then
  \begin{align*}
    \Hom_{\Tc_X}(M, \Om^{-n} (N)) &= 0 \ \text{for } \ n = m, m+1, \cdots. m+r   \\
    &\implies  \Hom_{\Tc_X}(M, \Om^{-n} (N) ) = 0 \ \text{for all }  \  n \geq m.
  \end{align*}
\end{theorem}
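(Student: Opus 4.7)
The plan is to prove Theorem~\ref{murthy-strong} by strong induction on $r = \ecx N$. First I would replace $N$ by a representative $L$ with $\cx_A L = r$ and $L \cong N$ in $\Tc_X$, which is possible by the definition of $\ecx$ and which preserves both the hypothesis and the desired conclusion. The case $r = 0$ is trivial since then $L = 0$ in $\Tc_X$. The substantive work splits into the base case $r = 1$ and the inductive step for $r \geq 2$; the latter is a direct adaptation of the argument for Theorem~\ref{murthy-strong-empty} with a revised accounting of consecutive vanishings, while the former requires genuinely new input.

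For the inductive step, assume $r \geq 2$. Apply Lemma~\ref{mine} to $L$ to obtain a triangle $\Om^2 L \to L \to E \to \Om L$ in $\CMS(A)$ with $\cx_A E = r - 1$. Applying $\Hom_{\Tc_X}(M, -)$ yields the six-term long exact sequence $(\dagger)$ from the proof of Theorem~\ref{murthy-strong-empty}. From the $r+1$ consecutive vanishings in the hypothesis, exactness in $(\dagger)$ forces $r$ consecutive vanishings $\Hom_{\Tc_X}(M, \Om^{-j}E) = 0$ for $j = m+1, \ldots, m+r$ (each such vanishing is squeezed between the two consecutive $L$-vanishings at $n = j-1$ and $n = j$). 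Since $\ecx E \leq \cx_A E = r - 1$, the induction hypothesis applied to $E$ with starting index $m+1$ requires exactly $r$ consecutive vanishings, which is exactly what we have, and therefore yields $\Hom_{\Tc_X}(M, \Om^{-j}E) = 0$ for all $j \geq m+1$. Re-inserting this into $(\dagger)$, for every $j \geq m+1$ the map $\Hom_{\Tc_X}(M, \Om^{-j+1}L) \to \Hom_{\Tc_X}(M, \Om^{-j-1}L)$ is an isomorphism; iterating from the initial vanishings at $n = m+r-1$ and $n = m+r$ then propagates the vanishing to all $n \geq m$, giving both parities.

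For the base case $r = 1$, since $\cx_A L = 1$ the set $\Vc^*(L)$ is a finite set of points and so decomposes as a disjoint union of closed algebraic subsets $(\Vc^*(L) \cap X) \sqcup (\Vc^*(L) \setminus X)$. By \cite[3.1]{B} we obtain $L = L_1 \oplus L_2$ with $\Vc^*(L_1) \subseteq X$ and $\Vc^*(L_2) \subseteq \Pb^{c-1} \setminus X$, so $L \cong L_2$ in $\Tc_X$. By Proposition~\ref{khoya}, $\Hom_{\Tc_X}(M, \Om^{-n}L_2) \cong \Ext^n_A(M, L_2)$, and the task reduces to showing that $\Ext^m_A(M, L_2) = \Ext^{m+1}_A(M, L_2) = 0$ implies $\Ext^n_A(M, L_2) = 0$ for all $n \geq m$, under the hypothesis $\cx_A(M, L_2) \leq 1$. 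This is precisely where the main obstacle lies: Murthy's property of order $1$ for pairs of complexity at most $1$ over a complete intersection. Over a hypersurface it is a consequence of the Huneke--Wiegand theorem \cite{HW}; in higher codimension I would analyze the finitely generated graded $T = k[t_1,\ldots,t_c]$-module $\Ext^*_A(M, L_2) \otimes_A k$, whose projective support is $0$-dimensional, combined with the eventual $2$-periodicity of $L_2$ (a consequence of $\cx_A L_2 = 1$) and the long exact Ext sequences coming from syzygy short exact sequences in $\CMa(A)$, propagating the two initial vanishings forward via periodicity and closing the gap at intermediate degrees by the long exact sequences.
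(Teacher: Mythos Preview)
Your inductive step for $r \geq 2$ is correct and matches the paper's exactly: both use Lemma~\ref{mine} to produce the triangle $\Om^2 L \to L \to E \to \Om L$ with $\cx_A E = r-1$, extract $r$ consecutive vanishings for $E$ from the long exact sequence, apply the induction hypothesis to $E$, and then propagate back using the resulting $2$-periodicity of $\Hom_{\Tc_X}(M,\Om^{-n}L)$.

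The organization differs from the paper. The paper layers an outer induction on $a = \dim\bigl(\Vc_X(M)\cap\Vc_X(N)\bigr)$, with base case $a=-1$ handled by invoking Theorem~\ref{murthy-strong-empty}, and then runs the inner induction on $r$ only for $r\geq 2$. Your single induction on $r$ is cleaner and avoids the outer layer entirely; in exchange you must handle $r=1$ directly rather than absorbing it into the empty-intersection case.

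Your treatment of $r=1$, however, is over-engineered and slightly misdirected. You do not need to decompose $L$, invoke Proposition~\ref{khoya}, or pass to $\Ext^*_A(M,L_2)$ at all. The single fact you need is that any MCM module $L$ over a complete intersection with $\cx_A L \leq 1$ has an eventually $2$-periodic minimal resolution (Eisenbud~\cite{E}), and since $\Om$ is an auto-equivalence on $\CMS(A)$ this gives $\Om^2(L)\cong L$ in $\CMS(A)$, hence in $\Tc_X$. Then $\Hom_{\Tc_X}(M,\Om^{-n}L)$ is $2$-periodic in $n$, so the two given vanishings at $n=m,m+1$ propagate immediately. There is no ``main obstacle'' here, and no gap at intermediate degrees to close. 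The Huneke--Wiegand reference is also off target: that result concerns freeness of one factor, not propagation of vanishing; what you need is simply periodicity.
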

\begin{proof}
Set $a = \dim \Vc_X(M) \cap \Vc_X(N)$. We prove the result by induction on $a$.
We first consider the case when $a = -1$; i.e., $ \Vc_X(M) \cap \Vc_X(N) = \emptyset$.
In this case the result follows from Theorem \ref{murthy-strong-empty}.

Let $a = t\geq 0$ and assume the result is proved when $a < t$.
Next we consider the case when $r = \cx_A N \geq 2$ and assume the results are proved for MCM modules $E$ with $\ecx_A E < r$.
By \ref{mine}   we have a short exact sequence,
\[
0 \rt K \rt \Om^{n+2}(N) \rt \Om^n(N) \rt 0;
\]
for some $n \geq 0$ with $\cx_A K = r - 1$. So we have a triangle in $\CMS(A)$
\[
K \rt  \Om^{n+2}(N) \rt \Om^n(N) \rt \Om^{-1}(K)
\]
Rotating it several times we get the following triangle in $\CMS(A)$
\[
\Om^2(N) \rt N \rt E \rt \Om(N)
\]
where $\cx E = r -1$.  Also note $\Vc^*(E) \subseteq \Vc^*(N)$. We takes its image in $\Tc_X$
\[
s \colon   \Om^2(N) \rt N \rt E \rt \Om(N).
\]
Note $\Vc_X(M) \cap \Vc_X(E) \subseteq \Vc_X(M) \cap \Vc_X(E)$.
So $\dim \Vc_X(M) \cap \Vc_X(E) \leq t$.
By applying $\Hom_{\Tc_X}(M, -)$ to $s$ we obtain long exact sequence:
\begin{align*}
  \Hom_{\Tc_X}(M, \Om^{-j} (N)) &\rt \Hom_{\Tc_X}(M, \Om^{-j} (E)) \rt \Hom_{\Tc_X}(M, \Om^{-j + 1} (N)) \rt \tag{$\dagger \dagger$} \\
  \Hom_{\Tc_X}(M, \Om^{-j -1} (N)) &\rt \Hom_{\Tc_X}(M, \Om^{-j -1} (E)) \rt \Hom_{\Tc_X}(M, \Om^{-j} (N)) \rt.
\end{align*}
It follows that
\[
\Hom_{\Tc_X}(M, \Om^{-j} (E)) = 0 \quad \text{for} \  j = m+1,\ldots, r.
\]
As $\ecx_A E \leq \cx_A E = r -1$ we get by induction hypotheses that \\
$\Hom_{\Tc_X}(M, \Om^{-j} (E)) = 0 $ for all $j \geq m+1$.
Putting $j = m + r$ in ($\dagger \dagger$) we get $\Hom_{\Tc_X}(M, \Om^{-m-r-1}(N)) = 0$.
Iterating we obtain $\Hom_{\Tc_X}(M, \Om^{-j} (N)) = 0$ for all $j \geq m$.
\end{proof}

\section{ Proof of Theorems \ref{GARC}, \ref{murthyC} and \ref{HW}}
In this section we give proofs of Theorems \ref{GARC}, \ref{murthyC} and \ref{HW}. We first prove:
\begin{theorem}
\label{convo}
(with hypotheses as in \ref{hypothesis} and notation as in \ref{defnC}.)
 Fix $i \in \{1,\ldots, c-1 \}$.
Let $M, N$ be any two MCM $A$-modules.
Consider the following two statements:
\begin{enumerate}[\rm (1)]
  \item $\Hom_{\Tc_i}(M, \Om^{-n}(N)) = 0 \ \text{for all} \ n \gg 0$.
  \item $\dim (\Vc_i(M)\cap\Vc_i(N)) \leq i -1$.
\end{enumerate}
Then $(1) \implies (2)$.
\end{theorem}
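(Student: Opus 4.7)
The plan is to mimic the proof of Theorem \ref{sym-main} almost verbatim, replacing the thick subcategory $\Sc_X$ with $\CMS_{\leq i}(A)$, and then to exploit that every object of $\CMS_{\leq i}(A)$ automatically has support variety of dimension at most $i-1$. First I would dispose of the trivial case: if $\cx_A M \leq i$ or $\cx_A N \leq i$, then by the definition of $\Vc_i$ in \ref{defnC} one of $\Vc_i(M), \Vc_i(N)$ equals $\emptyset$, so the intersection has dimension $-1 \leq i-1$ and there is nothing to prove. Hence one may assume $\cx_A M, \cx_A N > i$, so that $\Vc_i(M) = \topv(\Vc^*(M))$ and $\Vc_i(N) = \topv(\Vc^*(N))$.

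From the hypothesis $\Hom_{\Tc_i}(M, \Om^{-n}(N)) = 0$ for $n \geq n_0$, I would use Gulliksen's finiteness (\ref{supp}) over $R = A[t_1, \ldots, t_c]$ to choose $m \geq n_0$ and finitely many generators $u_j \in \Ext^{2m}_A(M, N)$ and $u'_k \in \Ext^{2m+1}_A(M, N)$ of the even and odd pieces of $\Ext^*_A(M, N)_{\geq 2m}$ as $R$-modules. Under the identification $\Ext^{2m}_A(M, N) = \sHom_A(M, \Om^{-2m}(N))$ each $u_j$ maps to zero in $\Hom_{\Tc_i}(M, \Om^{-2m}(N))$, so by the Verdier-quotient zero criterion (as used in the proof of Theorem \ref{sym-main}) it factors through some $W_j \in \CMS_{\leq i}(A)$; similarly each $u'_k$ factors through some $W'_k \in \CMS_{\leq i}(A)$. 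The one new ingredient relative to \ref{sym-main} is that, since $\cx_A W_j, \cx_A W'_k \leq i$, one has $\dim \Vc^*(W_j), \dim \Vc^*(W'_k) \leq i - 1$.

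The factorizations then produce $R$-linear surjections $\bigoplus_j \Ext^*_A(M, W_j) \twoheadrightarrow E^{ev}(M, N)_{\geq m}$ and $\bigoplus_k \Ext^*_A(M, W'_k) \twoheadrightarrow E^{odd}(M, N)_{\geq m}$. Tensoring with $k$ over $A$, taking annihilators in $T = k[t_1, \ldots, t_c]$, and invoking $\Vc^*(M, W) = \Vc^*(M) \cap \Vc^*(W)$ from \cite[5.6]{avr-b} give
\[
Var\bigl(\ann_T E(M, N)_{\geq 2m} \otimes k\bigr) \subseteq \Vc^*(M) \cap \Bigl(\bigcup_j \Vc^*(W_j) \cup \bigcup_k \Vc^*(W'_k)\Bigr),
\]
whose right hand side has dimension $\leq i - 1$ by the bound from the previous paragraph. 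Absorbing the truncation $\geq 2m$ by multiplying with a power of the graded maximal ideal of $T$ (as in the last lines of the proof of \ref{sym-main}) upgrades this to $\dim(\Vc^*(M) \cap \Vc^*(N)) \leq i - 1$. Since $\Vc_i(M) \cap \Vc_i(N) \subseteq \Vc^*(M) \cap \Vc^*(N)$, the desired bound follows. There is no essential obstacle: once one observes that thickness of $\CMS_{\leq i}(A)$ forces the auxiliary modules appearing in any factorization to have support variety of dimension at most $i-1$, the argument of Theorem \ref{sym-main} yields the dimension bound directly.
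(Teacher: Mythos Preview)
Your proposal is correct and follows precisely the route the paper intends: the paper's own proof is a single sentence stating that the argument of Theorem \ref{sym-main} goes through verbatim upon replacing $W_j \in \Sc_X$ by $W_j \in \CMS_{\leq i}(A)$, which is exactly what you have carried out. Your added observation that $\Vc_i(M)\cap\Vc_i(N)\subseteq \Vc^*(M)\cap\Vc^*(N)$ is the only step not explicit in the paper, and it is immediate from the definition of $\topv$.
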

\begin{proof}
  The proof of this result is completely analogous to proof of Theorem \ref{sym-main}. One simply replaces $W_j \in \Sc_X$ with $W_j \in \CMS_{\leq i}(A)$.
\end{proof}
As a consequence we give
\begin{proof}
[Proof of Theorem \ref{GARC}]
We get $\dim \topv(M) = \dim \Vc_i(M)   \leq i-1$.  This forces $\dim \Vc^*(M) \leq i-1$. So $\cx_A M \leq i$. Therefore $M = 0$ in $\Tc_i$.
\end{proof}
As we observed in the proof of Theorem \ref{murthy} it is convenient to prove a slightly more general result:
\begin{theorem}
  \label{murthyC-strong}
  (with hypotheses as in \ref{hypothesis} and notation as in \ref{defnC}.)  Let $M, N$ be objects in $\Tc_X$.
  Let $r = \cx N $. Then
  \begin{align*}
    \Hom_{\Tc_X}(M, \Om^{-n} (N)) &= 0 \ \text{for } \ n = m, m+1, \cdots. m+r - i   \\
    &\implies  \Hom_{\Tc_X}(M, \Om^{-n} (N) ) = 0 \ \text{for all }  \  n \geq m.
  \end{align*}
\end{theorem}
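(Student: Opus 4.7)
The plan is to prove the theorem by induction on $r = \cx_A N$, paralleling the proof of Theorem \ref{murthy-strong} but collapsing its outer induction on support dimension: in $\Tc_i$ every object of complexity at most $i$ is already zero, so no ``support intersection empty'' subroutine is required. (I read the $\Tc_X$ in the statement as a slip for $\Tc_i$, since the preamble invokes ``notation as in \ref{defnC}'' and the section concerns the complexity quotients.) The base case $r \le i$ is immediate: $N \in \CMS_{\le i}(A)$, so $N \cong 0$ in $\Tc_i$ and every Hom-set in sight is zero.

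For the inductive step, suppose $r > i$ and the statement holds for all objects of complexity strictly less than $r$. Invoke Lemma \ref{mine} to obtain a short exact sequence $0 \to K \to \Om^{n+2}(N) \to \Om^n(N) \to 0$ with $\cx_A K = r - 1$; rotating the associated triangle in $\CMS(A)$ (exactly as in the proof of Theorem \ref{murthy-strong}) yields a triangle $\Om^{2}(N) \to N \to E \to \Om(N)$ with $\cx_A E = r - 1$. Passing to $\Tc_i$ and applying $\Hom_{\Tc_i}(M, -)$ gives a long exact sequence of precisely the form $(\dagger\dagger)$:
\[
\cdots \to \Hom_{\Tc_i}(M, \Om^{-j} N) \to \Hom_{\Tc_i}(M, \Om^{-j} E) \to \Hom_{\Tc_i}(M, \Om^{-j+1} N) \to \Hom_{\Tc_i}(M, \Om^{-j-1} N) \to \cdots .
\]

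The crux is now bookkeeping on this sequence. From the hypothesized $r - i + 1$ consecutive $N$-vanishings in positions $n = m, m+1, \ldots, m + r - i$, reading off the sequence at each $j = m+1, \ldots, m + r - i$ (both neighbouring $N$-terms vanish there) forces $\Hom_{\Tc_i}(M, \Om^{-j} E) = 0$ for $r - i$ consecutive values of $j$. This is exactly $(r-1) - i + 1$ initial vanishings, matching the hypothesis the inductive claim demands for the complexity-$(r-1)$ object $E$ with starting point $m+1$; the induction then delivers $\Hom_{\Tc_i}(M, \Om^{-j} E) = 0$ for every $j \ge m + 1$.

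Finally, propagate back to $N$: with all $E$-terms beyond shift $m+1$ known to vanish, exactness of the long exact sequence lets one climb two at a time—at each $j \ge m + r - i$, the vanishing of the two bracketing $E$-terms together with an already-known $N$-vanishing two indices below forces $\Hom_{\Tc_i}(M, \Om^{-(j+1)} N) = 0$. Bootstrapping separately from the two parities present in the initial range $\{m, m+1\} \subseteq [m, m+r-i]$ yields $\Hom_{\Tc_i}(M, \Om^{-n} N) = 0$ for all $n \ge m$. The main obstacle is precisely the numerology: one must verify that the $r - i$ consecutive $E$-vanishings extracted are exactly the number the inductive hypothesis on $E$ asks for, and that the step-$2$ propagation back to $N$ reaches every $n \ge m$ without leaving a parity gap—the latter is what makes the threshold $r \ge i+1$ (equivalently, at least two initial $N$-vanishings) sharp.
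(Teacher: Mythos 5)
Your proposal is correct and takes essentially the same route as the paper: the trivial base case $r \le i$, then Lemma \ref{mine}, the rotated triangle $\Om^2(N) \rt N \rt E \rt \Om(N)$ and the long exact sequence of type $(\dagger\dagger)$, with induction on complexity exactly as in the proof of Theorem \ref{murthy-strong}, to which the paper simply defers after its base case. The only cosmetic difference is that you absorb the paper's separately treated case $r = i+1$ (where $K = 0$ in $\Tc_i$ makes $N$ two-periodic) into the uniform inductive step, since there $E = 0$ in $\Tc_i$ and the long exact sequence degenerates to the same step-two isomorphisms; you also correctly read the statement's $\Tc_X$ as $\Tc_i$ and verify the numerology the paper leaves implicit.
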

\begin{proof}
If $r \leq i$ then $N = 0$ in $\Tc_i$ and so we have nothing to prove.
Next we consider the case when $\cx_A N = i + 1$. By \ref{mine}
we have a short exact sequence:
\[
0 \rt K \rt \Om^{n+2}(N) \rt \Om^n(N) \rt 0;
\]
for some $n \geq 0$ and $\cx_A K = i$.
This yields an exact triangle in $\CMS(A)$
\[
K \rt \Om^{n+2}(N) \rt \Om^n(N) \rt \Om^{-1}(K).
\]
Taking the image of the above exact triangle in $\Tc_i$ and noting $K = 0$ in $\Tc_i$ we get
$\Om^{n+2}(N) \cong \Om^n(N)$  in $\Tc_i$. It follows that $N$ is two periodic in $\Tc_i$. The result trivially follows.

Rest of the proof is similar to proof of Theorem \ref{murthy-strong}.
\end{proof}

Finally we give:
\begin{proof}[Proof of Theorem \ref{HW}]
Suppose $U, V$ are MCM $A$-modules such that
\[
\Hom_{\Tc_{c-1}}(U, V) = \Hom_{\Tc_{c-1}}(U, \Om^{-1}(V)) = 0.
\]
Suppose if possible $U, V \neq 0$ in $\Tc_{c-1}$.
Thus $\cx_A U = \cx_A V = c$.
We have $\Vc^*(U) = \Vc^*(V) = \Pb^{c-1}$.
Thus $\Vc_{c-1}(U) = \Vc_{c-1}(V) = \Pb^{c-1}$.

Note $V$ is two periodic in $\Tc_{c-1}$. So we have
\[
\Hom_{\Tc_{c-1}}(U, \Om^{-j}(V)) = 0 \quad \text{for all} \ j \geq 0.
\]
By Theorem \ref{convo} we get $\dim (\Vc_{i-1}(U) \cap \Vc_{c-1}(V)) \leq c-2$. We get $\dim \Pb^{c-1} \leq c-2$ which is a contradiction.
\end{proof}

\end{document}